\documentclass[11pt,reqno]{amsart}
\calclayout
\usepackage{amssymb,mathtools}
\usepackage[hidelinks]{hyperref}

\newtheorem{Thm}{Theorem}[section]
\newtheorem{Lem}[Thm]{Lemma}

\newtheorem{Cor}[Thm]{Corollary}
\theoremstyle{remark}
\newtheorem{Rem}[Thm]{Remark}

\newcommand{\CC}{\mathbb{C}}
\newcommand{\NN}{\mathbb{N}}

\begin{document}

\title[Double basic hypergeometric sums]{Double basic hypergeometric sums via a regularized Jackson $q$-integral}
\author{Takumi Maesaka}
\address{Faculty of Mathematics, Kyushu University, 744 Motooka, Nishi-ku, Fukuoka 819-0395, Japan}
\email{nozaki.takumi.912@s.kyushu-u.ac.jp}
\subjclass{33D15, 33D05}
\keywords{Basic hypergeometric series, double sums, Jackson $q$-integrals, Andrews--Askey integral, analytic continuation}

\begin{abstract}
Using the Andrews--Askey integral, we derive a holomorphic extension formula for a Jackson $q$-integral. This formula allows identities established for terminating specializations to be continued to the nonterminating case. Our first application yields two companion binomial-type double-sum formulas. The second formula contains, as special cases, the nonterminating Sears--Carlitz transformation of Gasper and Rahman and Rahman's generating function for the Askey--Wilson polynomials. Applying the same method to the $q$-Chu--Vandermonde and Rogers' ${}_6\phi_5$ summations, we obtain two further integral representations for double sums of basic hypergeometric type. A balanced ${}_4\phi_3$ specialization of the latter identity is equivalent, via Sears' transformation, to the double-series transformation of Ismail, Rahman, and Suslov.
\end{abstract}

\maketitle

\section{Introduction}

Throughout this paper, let $0<|q|<1$.  For $a\in\CC$ and $n\in\NN\cup\{\infty\}$, we use the standard notation
\[
(a;q)_n=\prod_{j=0}^{n-1}(1-aq^j),
\qquad
(a_1,\ldots,a_r;q)_n=\prod_{i=1}^r(a_i;q)_n.
\]
We define the Jackson $q$-integral, without the conventional factor $1-q$, by
\begin{equation}\label{eq:q-integral}
\int_u^v g(t)\,d_qt
 \coloneqq \sum_{n=0}^{\infty}\bigl(vq^n g(vq^n)-uq^n g(uq^n)\bigr),
\end{equation}
whenever the series converges.

The Andrews--Askey integral \cite{aa} is
\begin{equation}\label{eq:AA-intro}
\int_u^v\frac{(tq/u,tq/v;q)_\infty}{(ct,dt;q)_\infty}\,d_qt
=v\frac{(q,u/v,vq/u,uvcd;q)_\infty}{(uc,ud,vc,vd;q)_\infty}.
\end{equation}
Various extensions of the Andrews--Askey integral and applications of terminating summations to $q$-beta integrals have been obtained by Al-Salam and Verma \cite{asv}, Fang \cite{fang}, Wang \cite{wang-aa,wang-pfaff}, and Liu \cite{liu-aa}.  The related passage from $q$-series expansions to beta-integral evaluations is also central to the work of Gasper and Schlosser \cite{gasper-schlosser}.  Comparing coefficients in a specialization of \eqref{eq:AA-intro} gives a family of $q$-moments closely related to an integral representation of the Al-Salam--Carlitz polynomials due to Wang \cite{w}.  Our first observation is that these moments give a holomorphic extension of
\[
\frac{(a/x;q)_\infty}{(q/x;q)_\infty}
\int_x^1\frac{(tq/x,tq;q)_\infty}{(at/x;q)_\infty}f(t)\,d_qt
\]
across $x=0$; see Theorem \ref{thm:regularity}.

This regularity result supplies the analytic step in a useful continuation argument.  For the binomial-type formulas, both sides are holomorphic in $t$ near $t=0$.  At the terminating specializations $c=q^{-N}$, finite identities show that the two sides agree as functions of $t$.  For each power of $t$, the corresponding coefficients on the two sides are polynomials of bounded degree in $c$.  Since these polynomials agree at the infinitely many points $c=q^{-N}$, they are identical.  This coefficientwise use of terminating specializations along a $q$-geometric sequence is a variant of Ismail's argument; a classical example is the proof of Bailey's ${}_6\psi_6$ summation by Askey and Ismail \cite{askey-ismail}.  The simplest instance yields the binomial-type formula in Theorem \ref{thm:binomial}, while Theorem \ref{thm:binomial-c} gives a companion formula involving the operator $\Delta_n^{(c)}$.  Specializations of the latter formula recover the nonterminating Sears--Carlitz transformation obtained by Gasper and Rahman \cite{gasper-rahman1986} and the generating function for the Askey--Wilson polynomials given by Rahman \cite{rahman1996}.  Applying analogous continuation arguments to the $q$-Chu--Vandermonde summation and Rogers' ${}_6\phi_5$ summation gives Theorems \ref{thm:qgauss} and \ref{thm:rogers}, respectively.

For reference, we use the standard definition
\[
{}_r\phi_s\!\left[
\begin{matrix}a_1,\ldots,a_r\\ b_1,\ldots,b_s\end{matrix};q,z
\right]
=\sum_{n=0}^{\infty}
\frac{(a_1,\ldots,a_r;q)_n}{(q,b_1,\ldots,b_s;q)_n}
\left((-1)^nq^{\binom n2}\right)^{1+s-r}z^n.
\]
As usual, terminating series are interpreted as finite sums, and the parameters
are assumed to be such that no denominator vanishes.
Throughout the paper, the double sums are understood as iterated sums, with
the terminating inner sum evaluated first.  In general, they are not
absolutely convergent as double series.

The remainder of the paper is organized as follows.  In Section \ref{sec:regularity}, we prove the regularity theorem and record a basic-hypergeometric form of it.  Section \ref{sec:binomial} gives two companion binomial-type double-sum identities and two classical specializations of the second.  In Sections \ref{sec:qgauss} and \ref{sec:rogers}, we derive the two remaining double-sum identities and their basic hypergeometric specializations.

\section{A regularized Jackson \texorpdfstring{$q$}{q}-integral and operator estimates}\label{sec:regularity}

We begin with the moment formula needed below.

\begin{Lem}\label{lem:moments}
For every nonnegative integer $m$, the following identity holds whenever
both sides are defined, and elsewhere by meromorphic continuation in the
parameters:
\begin{align}
&\int_x^1\frac{(tq/x,tq;q)_\infty}{(at/x;q)_\infty}t^m\,d_qt=\frac{(q,x,q/x;q)_\infty}{(a,a/x;q)_\infty}
\sum_{j=0}^m\frac{(q;q)_m(a;q)_{m-j}}{(q;q)_j(q;q)_{m-j}}x^j.
\label{eq:moment}
\end{align}
\end{Lem}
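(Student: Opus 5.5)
The plan is to obtain the moments as Taylor coefficients of a one-parameter specialization of the Andrews--Askey integral \eqref{eq:AA-intro}. Take $u=x$, $v=1$, $c=a/x$, and keep $d=s$ as a free generating variable. The left side of \eqref{eq:AA-intro} becomes
\[
\int_x^1\frac{(tq/x,tq;q)_\infty}{(at/x,st;q)_\infty}\,d_qt .
\]
On the right, $uvcd=as$, $uc=a$, $ud=xs$, $vc=a/x$ and $vd=s$, so it becomes
\[
\frac{(q,x,q/x,as;q)_\infty}{(a,a/x,xs,s;q)_\infty}.
\]
Since $v/u=1/x$ and $vq/u=q/x$, the prefactor is exactly $(q,x,q/x;q)_\infty$.

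On the left, I expand $1/(st;q)_\infty=\sum_{m\ge0}s^mt^m/(q;q)_m$ by the $q$-exponential (Euler) expansion. I then interchange this sum with the Jackson sum over the nodes $t=q^n$ and $t=xq^n$. This gives $\sum_m \frac{s^m}{(q;q)_m}\int_x^1\frac{(tq/x,tq;q)_\infty}{(at/x;q)_\infty}t^m\,d_qt$.

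On the right, I expand $(as;q)_\infty/(s;q)_\infty=\sum_k (a;q)_k s^k/(q;q)_k$ by the $q$-binomial theorem, and $1/(xs;q)_\infty=\sum_j x^js^j/(q;q)_j$. Taking the Cauchy product and comparing coefficients of $s^m$ gives
\[
\frac{(q,x,q/x;q)_\infty}{(a,a/x;q)_\infty}\sum_{j=0}^m\frac{(a;q)_{m-j}x^j}{(q;q)_j(q;q)_{m-j}}.
\]
Multiplying by $(q;q)_m$ yields exactly \eqref{eq:moment}.

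The step requiring care is the analytic justification of the interchange and of the coefficient comparison. I would first work in an open parameter region where everything is absolutely convergent. Concretely, I take $0<|x|<1$ with $x\notin q^{\mathbb Z}$, and $a$ such that no denominator vanishes. Then all nodes satisfy $|t|\le 1$. The factors $(tq/x,tq;q)_\infty/(at/x;q)_\infty$ are bounded on the nodes, and the weights $q^n,\ xq^n$ are summable. Hence for $|s|<1$ the double sum over $n$ and $m$ converges absolutely, because $\sum_m |s|^m|t|^m/|(q;q)_m|\le 1/(-|s|;|q|)_\infty$ uniformly. So Fubini applies and both sides are holomorphic in $s$ near $0$. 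The right side is likewise holomorphic in $|s|<1$, since $|xs|<1$, and the power series identification is legitimate. Note also that \eqref{eq:AA-intro} itself holds in this region, as it is a sum of two absolutely convergent series.

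Finally, to reach the stated generality ("whenever both sides are defined, and elsewhere by meromorphic continuation"), I note the following. For fixed $m$ the left side is a convergent Jackson sum that depends meromorphically on $(a,x)$ wherever the nodes avoid the poles. The right side is manifestly meromorphic. Both agree on the open set above, so the identity theorem extends the equality. I expect the main (mild) obstacle to be this bookkeeping of convergence domains: the case $|x|>1$ is handled by continuation in $x$ rather than directly, since there the factor $(tq/x;q)_\infty$ at $t=q^n$ and the node size $|xq^n|$ change the bounds.
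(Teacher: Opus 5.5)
Your proposal is correct and is essentially the paper's proof: specialize the Andrews--Askey integral at $u=x$, $v=1$, $(c,d)=(a/x,s)$, expand $1/(st;q)_\infty$ on the left and $(as;q)_\infty/(s,xs;q)_\infty$ on the right, and compare coefficients of $s^m$; your convergence argument supplies detail that the paper leaves implicit. One slip needs fixing: the uniform majorant is $\sum_m |s|^m/(|q|;|q|)_m=1/(|s|;|q|)_\infty$, which follows from $|(q;q)_m|\ge(|q|;|q|)_m$ and $|t|\le 1$, not $1/(-|s|;|q|)_\infty$, since the latter is at most $1$ and so cannot dominate the sum.
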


\begin{proof}
In \eqref{eq:AA-intro}, put $u=x$, $v=1$, and replace $(c,d)$ by $(a/x,z)$.  We obtain
\[
\int_x^1\frac{(tq/x,tq;q)_\infty}{(at/x,zt;q)_\infty}\,d_qt
=\frac{(q,x,q/x,az;q)_\infty}{(a,a/x,xz,z;q)_\infty}.
\]
The $q$-binomial theorem gives
\[
\frac{(az;q)_\infty}{(z,xz;q)_\infty}
=\sum_{m=0}^{\infty}z^m
\sum_{j=0}^m\frac{(a;q)_{m-j}}{(q;q)_j(q;q)_{m-j}}x^j.
\]
Comparing the coefficients of $z^m$ proves \eqref{eq:moment}.
\end{proof}

\begin{Thm}\label{thm:regularity}
Fix $a\in\CC$ such that $(a;q)_\infty\neq0$, and let
\[
f(z)=\sum_{n=0}^{\infty}c_nz^n
\]
be holomorphic in $|z|<R$, where $R>1$.  Wherever the following expression is initially defined, set
\[
\mathcal J_{a,f}(x)
\coloneqq \frac{(a/x;q)_\infty}{(q/x;q)_\infty}
\int_x^1\frac{(tq/x,tq;q)_\infty}{(at/x;q)_\infty}f(t)\,d_qt.
\]
Then $\mathcal J_{a,f}$ extends holomorphically to $|x|<R$.  More precisely,
\begin{equation}\label{eq:regularity-series}
\mathcal J_{a,f}(x)
=\frac{(q,x;q)_\infty}{(a;q)_\infty}
\sum_{k=0}^{\infty}x^k
\sum_{n=0}^{\infty}
\frac{(q^{k+1},a;q)_n}{(q;q)_n}c_{n+k}.
\end{equation}
The series on the right converges locally uniformly on $|x|<R$.  For fixed
$x$, the right-hand side also gives the meromorphic continuation in $a$,
with possible poles at $a\in\{q^{-j}:j\geq0\}$.
\end{Thm}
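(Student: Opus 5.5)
The plan is to expand $f$ termwise inside the Jackson $q$-integral, evaluate each monomial with Lemma \ref{lem:moments}, and then reorganize the resulting double series. First fix $x$ with $0<|x|<1$, $x\notin\{q^{j}:j\in\mathbb{Z}\}$, and $a$ generic, so that $\mathcal J_{a,f}(x)$ is initially defined. By \eqref{eq:q-integral}, the integral is a sum over the nodes $t=q^n$ and $t=xq^n$, all of which lie in $|t|\le 1<R$. Choose $1<r<R$ and let $M=\sup_{|z|=r}|f(z)|$, so that $|c_m|\le Mr^{-m}$.

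On the nodes we have $\sum_m|c_m||t|^m\le M/(1-1/r)$. The weight $(tq/x,tq;q)_\infty/(at/x;q)_\infty$ is bounded on the nodes, and each node carries the factor $q^n$ or $xq^n$. Hence the double sum over nodes and over $m$ converges absolutely, and we may interchange $\sum_m c_m$ with the Jackson integral. Applying \eqref{eq:moment} to each $t^m$ and multiplying by the prefactor $(a/x;q)_\infty/(q/x;q)_\infty$ cancels the factors $(a/x;q)_\infty$ and $(q/x;q)_\infty$. This leaves
\[
\mathcal J_{a,f}(x)=\frac{(q,x;q)_\infty}{(a;q)_\infty}\sum_{m=0}^\infty c_m\sum_{j=0}^m\frac{(q;q)_m(a;q)_{m-j}}{(q;q)_j(q;q)_{m-j}}x^j .
\]
Next set $k=j$ and $n=m-j$, and use $(q;q)_{n+k}/(q;q)_k=(q^{k+1};q)_n$. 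This turns the double sum into the right-hand side of \eqref{eq:regularity-series}, provided the rearrangement is justified by absolute convergence.

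The main analytic step is a uniform estimate that handles both this rearrangement and the holomorphic extension. Put $Q=|q|$. We have $|(q^{k+1};q)_n|\le(-Q;Q)_\infty$ and $|(q;q)_n|\ge(Q;Q)_\infty$. Moreover $|(a;q)_n|\le(-|a|;Q)_\infty$, which is bounded uniformly for $a$ in compact sets. Therefore
\[
\Bigl|\frac{(q^{k+1},a;q)_n}{(q;q)_n}c_{n+k}\Bigr|\le C_a M r^{-n-k},
\]
so the inner sum in \eqref{eq:regularity-series} is $O(r^{-k})$, and the full double series is dominated by $\sum_{k,n}C_aMr^{-n-k}|x|^k$. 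This bound gives absolute convergence for $|x|<r$, so the reordering above is legitimate. It also gives local uniform convergence on $|x|<r$; since $r<R$ is arbitrary, the convergence is locally uniform on $|x|<R$. The right-hand side is therefore holomorphic on $|x|<R$ and agrees with $\mathcal J_{a,f}$ on the open set where the latter was initially defined. By the identity theorem it furnishes the holomorphic extension to $|x|<R$, including across $x=0$.

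Finally, consider the dependence on $a$ for fixed $x$. Each inner term is polynomial in $a$, and the estimate above is uniform for $a$ in compact sets. Hence the double sum is entire in $a$, and the only possible singularities come from the factor $1/(a;q)_\infty$, that is, poles at $a\in\{q^{-j}:j\ge0\}$. The left-hand side is meromorphic in $a$ where defined, so the two sides agree identically and the right-hand side gives the meromorphic continuation in $a$. I expect the only delicate point to be the interchange of the $m$-sum with the Jackson integral. It rests on boundedness of the weight at the nodes, which requires $(at/x;q)_\infty\neq0$ there; that holds for generic $a$, and the general case follows from the continuation.
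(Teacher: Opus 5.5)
Your proposal is correct and follows the paper's proof. Both multiply the moment formula \eqref{eq:moment} by $c_m$, sum over $m$, reindex using $(q;q)_{n+k}/(q;q)_k=(q^{k+1};q)_n$, and combine Cauchy's estimate with uniform bounds on $(q^{k+1},a;q)_n/(q;q)_n$ to bound the inner sums by $O(r^{-k})$. Your explicit justification of the interchange of $\sum_m$ with the Jackson integral, and of the rearrangement by absolute convergence, is a careful addition that the paper leaves implicit.
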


\begin{proof}
Multiplying \eqref{eq:moment} by $c_m$ and summing over $m$ gives
\begin{align*}
&\int_x^1\frac{(tq/x,tq;q)_\infty}{(at/x;q)_\infty}f(t)\,d_qt=\frac{(q,x,q/x;q)_\infty}{(a,a/x;q)_\infty}
\sum_{k=0}^{\infty}\frac{x^k}{(q;q)_k}
\sum_{n=0}^{\infty}
\frac{(q;q)_{n+k}(a;q)_n}{(q;q)_n}c_{n+k}.
\end{align*}
Since $(q;q)_{n+k}/(q;q)_k=(q^{k+1};q)_n$, this is \eqref{eq:regularity-series}.

It remains to justify the asserted convergence.  Fix $\rho$ with $1<\rho<R$.  Cauchy's estimate gives $|c_m|\leq M_\rho\rho^{-m}$.  The factors
\[
\frac{(q^{k+1},a;q)_n}{(q;q)_n}
\]
are bounded uniformly in $k,n\geq0$.  Consequently, for a constant $C_\rho$ independent of $k$,
\[
\left|\sum_{n=0}^{\infty}
\frac{(q^{k+1},a;q)_n}{(q;q)_n}c_{n+k}\right|
\leq C_\rho\rho^{-k}.
\]
Thus the outer series converges locally uniformly for $|x|<\rho$.  Since $\rho<R$ is arbitrary, the result follows.
\end{proof}

The following form makes clear how Theorem \ref{thm:regularity} regularizes a sum of two nonterminating basic hypergeometric series.

\begin{Cor}\label{cor:regular-combination}
Let $R>1$, and let $a,A_1,\ldots,A_r,B_1,\ldots,B_s$ satisfy
\[
 (a,B_1,\ldots,B_s;q)_\infty\neq0,
 \qquad |A_i|R<1\quad(1\leq i\leq r).
\]
Then
\begin{align}
&\sum_{k=0}^{\infty}
\frac{(A_1,\ldots,A_r,a/x;q)_k}
{(q,B_1,\ldots,B_s,q/x;q)_k}q^k \notag\\
&\quad+
\frac{(x,a/x,A_1,\ldots,A_r,B_1x,\ldots,B_sx;q)_\infty}
{(1/x,a,A_1x,\ldots,A_rx,B_1,\ldots,B_s;q)_\infty}\sum_{k=0}^{\infty}
\frac{(A_1x,\ldots,A_rx,a;q)_k}
{(q,B_1x,\ldots,B_sx,xq;q)_k}q^k
\label{eq:regular-combination}
\end{align}
extends holomorphically to $|x|<R$.  In particular, if $a\neq0$, $x=aq^N$,
and the remaining denominators do not vanish, its value is
\begin{equation}\label{eq:terminating-value}
\sum_{k=0}^N
\frac{(A_1,\ldots,A_r,q^{-N};q)_k}
{(q,B_1,\ldots,B_s,q^{1-N}/a;q)_k}q^k.
\end{equation}
\end{Cor}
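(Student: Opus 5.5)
The plan is to specialize Theorem \ref{thm:regularity} to a suitable $f$. We take
\[
f(t)=\frac{(B_1t,\ldots,B_st;q)_\infty}{(A_1t,\ldots,A_rt;q)_\infty}.
\]
Its only singularities are at $t=A_i^{-1}q^{-j}$ with $j\ge0$, and these satisfy $|t|\ge |A_i|^{-1}>R$. So $f$ is holomorphic in $|t|<R$, and the theorem applies: $\mathcal J_{a,f}$ extends holomorphically to $|x|<R$. It then remains to show that $\mathcal J_{a,f}(x)$, wherever it is initially defined, is a fixed nonzero constant times \eqref{eq:regular-combination}. The constant should be $(q,B_1,\ldots,B_s;q)_\infty/(A_1,\ldots,A_r;q)_\infty$, which is independent of $x$ and nonzero by the hypotheses.

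For this we expand the Jackson integral \eqref{eq:q-integral} with $u=x$, $v=1$, which splits it into two sums.

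\emph{Points $t=q^n$.} We use
\[
(q^{n+1};q)_\infty=\frac{(q;q)_\infty}{(q;q)_n},\qquad
(q^{n+1}/x;q)_\infty=\frac{(q/x;q)_\infty}{(q/x;q)_n},\qquad
(aq^n/x;q)_\infty=\frac{(a/x;q)_\infty}{(a/x;q)_n},\qquad
f(q^n)=\frac{(B;q)_\infty}{(A;q)_\infty}\frac{(A;q)_n}{(B;q)_n}.
\]
After multiplying by the prefactor $(a/x;q)_\infty/(q/x;q)_\infty$, the first sum becomes $(q,B;q)_\infty/(A;q)_\infty$ times the first series in \eqref{eq:regular-combination}.

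\emph{Points $t=xq^n$.} This part carries the factor $-x$. Here $tq/x=q^{n+1}$, $tq=xq^{n+1}$, $at/x=aq^n$, and $f(xq^n)$ produces $(Bx;q)_\infty(Ax;q)_n/\bigl((Ax;q)_\infty(Bx;q)_n\bigr)$. Collecting the constants uses the identity
\[
\frac{-x\,(xq;q)_\infty}{(q/x;q)_\infty}=\frac{(x;q)_\infty}{(1/x;q)_\infty},
\]
which follows from $(x;q)_\infty=(1-x)(xq;q)_\infty$ and $(1/x;q)_\infty=(1-1/x)(q/x;q)_\infty$. With this, the second sum becomes the same constant times the second term of \eqref{eq:regular-combination}. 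Since that constant does not depend on $x$, dividing by it gives the holomorphic extension. The identity theorem then removes the dependence on the initial domain, which is $x$ off the $q$-lattices where denominators vanish.

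For the terminating value, put $x=aq^N$. Then $a/x=q^{-N}$, so $(a/x;q)_\infty=0$ and the second term in \eqref{eq:regular-combination} vanishes, provided its other denominators, in particular $(1/x,a,A_ix,B_j;q)_\infty$, are nonzero, as assumed. In the first series, $(q^{-N};q)_k=0$ for $k>N$, and $q/x=q^{1-N}/a$. This yields exactly \eqref{eq:terminating-value}. The holomorphic extension agrees with this value by continuity at $x=aq^N$, since the expression is holomorphic there.

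The main obstacle is bookkeeping rather than analysis. One must check carefully that the two halves of the Jackson integral, with the sign and the factor $x$ from the lower endpoint, combine with the prefactor into exactly the stated quotient of infinite products. The analytic content is entirely supplied by Theorem \ref{thm:regularity}.
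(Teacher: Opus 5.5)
Your proof is correct and follows the paper's argument. The paper takes $h(z)=\frac{(A_1,\ldots,A_r,B_1z,\ldots,B_sz;q)_\infty}{(A_1z,\ldots,A_rz,B_1,\ldots,B_s;q)_\infty}$, which is your $f$ times the constant $(A_1,\ldots,A_r;q)_\infty/(B_1,\ldots,B_s;q)_\infty$, so the expression is exactly $(q;q)_\infty^{-1}\mathcal J_{a,h}(x)$; it then applies Theorem \ref{thm:regularity} and reads off the terminating value from $(a/x;q)_\infty=0$, just as you do. Your lattice bookkeeping, including $-x(xq;q)_\infty/(q/x;q)_\infty=(x;q)_\infty/(1/x;q)_\infty$, checks out, and your normalizing constant is finite and nonzero because $|A_i|<1$ and $(B_1,\ldots,B_s;q)_\infty\neq0$.
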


\begin{proof}
Set
\[
h(z)\coloneqq \frac{(A_1,\ldots,A_r,B_1z,\ldots,B_sz;q)_\infty}
{(A_1z,\ldots,A_rz,B_1,\ldots,B_s;q)_\infty}.
\]
The assumptions imply that $h$ is holomorphic in $|z|<R$, and one has
\[
h(q^k)=\frac{(A_1,\ldots,A_r;q)_k}{(B_1,\ldots,B_s;q)_k}.
\]
Expanding the Jackson integral defining $\mathcal J_{a,h}(x)$ along its two $q$-lattices shows that the expression in \eqref{eq:regular-combination} is exactly $(q;q)_\infty^{-1}\mathcal J_{a,h}(x)$.  Theorem \ref{thm:regularity} proves the first assertion.  If $x=aq^N$, then $(a/x;q)_\infty=(q^{-N};q)_\infty=0$, so the second term in \eqref{eq:regular-combination} vanishes, while the first series terminates at $k=N$.  This gives \eqref{eq:terminating-value}.
\end{proof}

We conclude this section with estimates for two finite operators.  These
estimates will be used to justify convergence of the outer series and the
coefficientwise manipulations throughout the remainder of the paper.

\begin{Lem}\label{lem:operators}
Let $f(z)=\sum_{m\geq0}c_mz^m$ be holomorphic in $|z|<R$, where $R>1$, and define
\begin{align*}
\Delta_n(f)&\coloneqq \sum_{k=0}^n\frac{(q^{-n};q)_k}{(q;q)_k}f(q^k)q^k,\\
\Delta_n^{(a)}(f)&\coloneqq \sum_{k=0}^n\frac{(q^{-n},aq^n;q)_k}{(q;q)_k}f(q^k)q^k.
\end{align*}
For every $\rho$ with $1<\rho<R$ and every fixed $a\in\CC$, there is a
constant $C_{\rho,a}$ such that
\[
|\Delta_n(f)|,|\Delta_n^{(a)}(f)|\leq C_{\rho,a}\rho^{-n}
\qquad(n\geq0).
\]
\end{Lem}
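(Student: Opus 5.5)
Expand $f$ in its Taylor series and treat one monomial $z^m$ at a time. For $f(z)=z^m$, the $q$-binomial theorem gives
\[
\Delta_n(z^m)=\sum_{k=0}^n\frac{(q^{-n};q)_k}{(q;q)_k}q^{k(m+1)}=(q^{m+1-n};q)_n .
\]
This vanishes for $m<n$, since the factor $j=n-m-1$ in the product is $1-q^0=0$. For $m\ge n$ it equals $(q^{m+1-n};q)_n$, which is bounded in absolute value by $(-|q|;|q|)_\infty$. Hence
\[
\Delta_n(f)=\sum_{m\ge n}c_m(q^{m+1-n};q)_n .
\]
Cauchy's estimate $|c_m|\le M_\rho\rho^{-m}$ then gives $|\Delta_n(f)|\le M_\rho(-|q|;|q|)_\infty\,\rho^{-n}/(1-\rho^{-1})$. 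Exchanging the finite $k$-sum with the absolutely convergent $m$-sum is harmless: $q^k$ lies in the unit disk and $f(q^k)=\sum c_mq^{km}$ converges absolutely.

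For $\Delta_n^{(a)}$ I follow the same plan, using the $q$-Chu--Vandermonde (or $q$-Pfaff--Saalschütz) summation for the monomial:
\[
\Delta_n^{(a)}(z^m)={}_2\phi_1\!\left[\begin{matrix}q^{-n},aq^n\\ 0\end{matrix};q,q^{m+1}\right].
\]
This is less clean, so I would first expand $z^m$ in a different basis adapted to the operator. Set $p_j(z)=(z;q)_j$, so that $p_j(q^k)=(q^k;q)_j$ and
\[
\Delta_n^{(a)}(p_j)=\sum_k\frac{(q^{-n},aq^n;q)_k}{(q;q)_k}(q^k;q)_jq^k .
\]
Since $(q^k;q)_j=(q;q)_{k-1}\cdots$ vanishes for $1\le k\le j$ only partially, it is better to use $(q^{-k};q)_j$-type bases. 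The cleaner route is to write $z^m$ via the inverse $q$-binomial expansion $z^m=\sum_j \binom{m}{j}_q\cdots$ in terms of $(z;q)_j$. The key claim to aim for is
\[
\Delta_n^{(a)}(z^m)=0\quad (m<n),\qquad |\Delta_n^{(a)}(z^m)|\le C_a B^{m}\ \text{with no growth},
\]
which would give the same tail bound $\sum_{m\ge n}|c_m|C_a\le C_{\rho,a}\rho^{-n}$.

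The vanishing for $m<n$ follows from orthogonality. The operator $\Delta_n^{(a)}$ is, up to normalization, the pairing of $f$ with the little $q$-Jacobi polynomial $p_n(q^k;a/q,1;q)={}_2\phi_1(q^{-n},aq^n;q;q,q^{k+1})$ against the weight on $\{q^k\}$. Rather than rely on that identification, I would verify the vanishing directly. By $q$-Chu--Vandermonde in the form ${}_2\phi_1(q^{-n},b;c;q,q)$ with $c=0$ reversed, one has
\[
\Delta_n^{(a)}(z^m)=\sum_k\frac{(q^{-n},aq^n;q)_k}{(q;q)_k}q^{k(m+1)} .
\]
Using the $q$-Pfaff--Saalschütz summation with $b=aq^n$, the second argument $q^{-m}$ and the third $q^{\,?}$, this evaluates to $(q^{m+1-n},\ldots)$-type products containing $(q^{-m};q)_n$-like factors, which vanish for $m<n$. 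I expect the concrete evaluation to be
\[
\Delta_n^{(a)}(z^m)=(-1)^nq^{\binom n2}\,\frac{(q^{m-n+1};q)_n\,(\ldots)}{\ldots}\,a^n q^{n(\ldots)}\cdot(\text{terminating }{}_3\phi_2\text{ in }m) .
\]
Bounding this uniformly in $m\ge n$ with at most $\rho'^{\,m}$ growth for any $\rho'>1$ is the main obstacle. If a closed form proves awkward, I would fall back on this: write $\Delta_n^{(a)}=\sum_{i=0}^n\binom{n}{i}$-type combinations of $\Delta_i$ applied to $f$ times $(z;q)$-factors, via $(aq^n;q)_k$ expanded by the $q$-binomial theorem in powers of $q^k$. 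Then $(aq^n;q)_k=\sum_i\frac{(q^{-k};q)_i}{(q;q)_i}(\cdots)$. This expresses $\Delta_n^{(a)}(f)$ as a finite combination with bounded coefficients of $\Delta_{n-i}(g_i)$, where the $g_i$ are $f$ multiplied by bounded holomorphic factors, each already controlled by the first part.
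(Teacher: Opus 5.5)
Your argument for $\Delta_n$ is correct and matches the paper's, which treats it as the case $a=0$. The gap is in $\Delta_n^{(a)}$: your key claim $\Delta_n^{(a)}(z^m)=0$ for $m<n$ is false whenever $a\neq0$. Already $\Delta_1^{(a)}(1)=1-(1-aq)=aq$. More generally, letting $c\to0$ in the $q$-Chu--Vandermonde sum ${}_2\phi_1(q^{-n},aq^n;c;q,q)=(aq^n)^n(c/(aq^n);q)_n/(c;q)_n$ gives $\Delta_n^{(a)}(1)=a^nq^{n^2}$.

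The orthogonality heuristic does not apply. $\Delta_n^{(a)}(z^m)$ is just the value at $x=q^m$ of the single polynomial ${}_2\phi_1(q^{-n},aq^n;0;q,qx)$, not a weighted pairing of two polynomials, so nothing forces it to vanish. Hence the tail $\sum_{m\geq n}|c_m|C_a$ does not account for all of $\Delta_n^{(a)}(f)$, and the terms with $m<n$ must be estimated. The paper shows they are small rather than zero: for $m<n$ and $n$ large, $|\Delta_n^{(a)}(z^m)|\leq C(|a||q|^n)^{n-m}$. With $h=n-m$, their total contribution is therefore at most $CM_\rho\rho^{-n}\sum_{h\geq1}(\rho|a||q|^n)^h=O(\rho^{-n})$.

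Your fallback is the right decomposition: it is exactly the paper's expansion of $(aq^n;q)_k$ by the finite $q$-binomial theorem, followed by $k=j+\ell$. As stated, however, it does not close. Carried out, it gives
\[
\Delta_n^{(a)}(f)=\sum_{j=0}^n a^jq^{j^2}\frac{(q^{n-j+1};q)_j}{(q;q)_j}\,\Delta_{n-j}\bigl(f(q^j\,\cdot\,)\bigr),
\]
so the $g_j$ are dilations $z\mapsto f(q^jz)$, not $f$ times extra factors. If you only use that the coefficients are bounded and that $|\Delta_{n-j}(g_j)|\leq C\rho^{-(n-j)}$, then $n+1$ such terms sum to $O(1)$, not $O(\rho^{-n})$.

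The missing point is the Gaussian factor $a^jq^{j^2}$ in the $j$th coefficient. It comes from $q^{nj+\binom j2}(q^{-n};q)_j=(-1)^jq^{2\binom j2}(q^{n-j+1};q)_j$, and it beats the loss $\rho^{j}$. The Taylor coefficients $c_mq^{jm}$ of $f(q^jz)$ are dominated by $|c_m|$, so your first-part bound holds uniformly in $j$. You then obtain $|\Delta_n^{(a)}(f)|\leq C\rho^{-n}\sum_{j\geq0}(|a|\rho)^j|q|^{j^2}$, which is the lemma. With this made explicit, the fallback is a complete proof and a slightly more streamlined packaging of the paper's split into $m\geq n$ and $m<n$.
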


\begin{proof}
It is enough to prove the estimate for $\Delta_n^{(a)}$ with arbitrary but
fixed $a$.  Indeed, $(0;q)_k=1$ gives
$\Delta_n^{(0)}(f)=\Delta_n(f)$, so the first estimate is the specialization
$a=0$ of the second estimate.

The finite $q$-binomial theorem gives
\[
(aq^n;q)_k
=\sum_{j=0}^k\frac{(q;q)_k}{(q;q)_j(q;q)_{k-j}}
(-a)^jq^{nj+\binom j2}.
\]
Consequently, for a monomial $z^m$, after interchanging the two finite sums and
putting $\ell=k-j$, we obtain
\begin{align*}
\Delta_n^{(a)}(z^m)
&=\sum_{k=0}^n\frac{(q^{-n};q)_k}{(q;q)_k}q^{(m+1)k}
\sum_{j=0}^k\frac{(q;q)_k(-a)^jq^{nj+\binom j2}}
{(q;q)_j(q;q)_{k-j}}\\
&=\sum_{j=0}^n
\frac{(-a)^jq^{nj+\binom j2}(q^{-n};q)_j}{(q;q)_j}q^{(m+1)j}
\sum_{\ell=0}^{n-j}
\frac{(q^{j-n};q)_\ell}{(q;q)_\ell}q^{(m+1)\ell}\\
&=\sum_{j=0}^n
\frac{(-a)^jq^{nj+\binom j2}(q^{-n};q)_j}{(q;q)_j}q^{(m+1)j}
(q^{m+1+j-n};q)_{n-j}\\
&=\sum_{j=\max(0,n-m)}^n
\frac{(-a)^jq^{nj+\binom j2}(q^{-n};q)_j}{(q;q)_j}q^{(m+1)j}
(q^{m+1+j-n};q)_{n-j}.
\end{align*}
Here the third line follows from the finite $q$-binomial theorem, and the
terms with $j<n-m$ vanish because
$(q^{m+1+j-n};q)_{n-j}$ then contains the factor $1-q^0$.  If $m\geq n$,
the finite sum defining $\Delta_n^{(a)}(z^m)$ is bounded uniformly in $m,n$
after using
\[
(q^{-n};q)_k=(-1)^kq^{-nk+\binom k2}(q^{n-k+1};q)_k.
\]
Indeed, since $m+1-n\geq1$ and all the remaining shifted factorials are
uniformly bounded,
\[
|\Delta_n^{(a)}(z^m)|
\leq C\sum_{k=0}^n |q|^{(m+1-n)k+\binom k2}
\leq C\sum_{k=0}^{\infty}|q|^{\binom k2}.
\]
If $m<n$, put $h=n-m$.  After substituting the displayed formula for
$(q^{-n};q)_j$, all $q$-shifted factorials that remain are uniformly
bounded, and hence
\[
|\Delta_n^{(a)}(z^m)|
\leq C\sum_{j=h}^n |a|^j|q|^{j(j+m)}.
\]
For all sufficiently large $n$, the last expression is at most
\[
C'\bigl(|a||q|^n\bigr)^h.
\]
Therefore, using $|c_m|\leq M_\rho\rho^{-m}$ and writing $m=n-h$, we obtain
\begin{align*}
\sum_{m=0}^{n-1}|c_m|\,|\Delta_n^{(a)}(z^m)|
&\leq C'M_\rho\rho^{-n}
\sum_{h=1}^n\bigl(\rho|a||q|^n\bigr)^h
=O(\rho^{-n}).
\end{align*}
The contribution from $m\geq n$ is bounded by
$C\sum_{m\geq n}|c_m|=O(\rho^{-n})$.  Enlarging the constant to cover the
finitely many remaining values of $n$ proves the assertion.
\end{proof}

\section{Binomial-type double sums}\label{sec:binomial}

We first record two finite identities that follow directly from the finite $q$-binomial theorem.  Coefficientwise continuation in the parameter $c$ then gives companion formulas involving $\Delta_n$ and $\Delta_n^{(c)}$.  The latter formula contains two classical identities as special cases.

\begin{Lem}\label{lem:binomial-finite}
Let $N$ be a nonnegative integer.  Then
\begin{align}
&\sum_{n=0}^N\frac{(q^{-N};q)_n}{(q;q)_n}t^n\Delta_n(f)
=(tq^{-N};q)_N\sum_{k=0}^N
\frac{(q^{-N};q)_k}{(q,q/t;q)_k}f(q^k)q^k,
\label{eq:binomial-finite}
\\
&\sum_{n=0}^N\frac{(q^{-N};q)_n}{(q;q)_n}t^n
\Delta_n^{(q^{-N})}(f)=(tq^{-N};q)_N\sum_{k=0}^N
\frac{(q^{-N};q)_{2k}}
{(q,tq^{-N},q/t;q)_k}f(q^k)q^k.
\label{eq:binomial-c-finite}
\end{align}
\end{Lem}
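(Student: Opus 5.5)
Both identities are finite, so the plan is to insert the definitions of $\Delta_n$ and $\Delta_n^{(q^{-N})}$, swap the two finite sums, and evaluate the inner sum over $n$ by the finite $q$-binomial theorem.

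For \eqref{eq:binomial-finite}, write $\Delta_n(f)=\sum_{k\le n}\frac{(q^{-n};q)_k}{(q;q)_k}f(q^k)q^k$. The left side becomes
\[
\sum_{k=0}^N\frac{f(q^k)q^k}{(q;q)_k}\sum_{n=k}^N\frac{(q^{-N};q)_n(q^{-n};q)_k}{(q;q)_n}t^n .
\]
Use $(q^{-n};q)_k/(q;q)_n=(-1)^kq^{\binom k2-nk}/(q;q)_{n-k}$ and $(q^{-N};q)_{k+j}=(q^{-N};q)_k(q^{k-N};q)_j$, and put $n=k+j$. The inner sum then equals
\[
(q^{-N};q)_k(-t)^kq^{\binom k2-k^2}\sum_{j=0}^{N-k}\frac{(q^{k-N};q)_j}{(q;q)_j}(tq^{-k})^j .
\]
The remaining sum is $(tq^{-N};q)_{N-k}$ by the finite $q$-binomial theorem. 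Next rewrite $(tq^{-N};q)_{N-k}=(tq^{-N};q)_N/(tq^{-k};q)_k$, and use
\[
(tq^{-k};q)_k=(-t)^kq^{-\binom{k+1}{2}}(q/t;q)_k .
\]
The powers of $q$ cancel: $\binom k2-k^2+\binom{k+1}2=0$. This leaves exactly $(tq^{-N};q)_N\frac{(q^{-N};q)_k}{(q,q/t;q)_k}f(q^k)q^k$, as claimed.

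For \eqref{eq:binomial-c-finite}, the summand now carries the extra factor $(q^{n-N};q)_k$. After the same swap and the shift $n=k+j$, it becomes $(q^{k+j-N};q)_k$. This factor vanishes unless $k+j\le N-k$, i.e. $j\le N-2k$. For $j\le N-2k$ it can be written as
\[
(q^{k-N};q)_{k+j}/(q^{k-N};q)_j .
\]
Combined with $(q^{k-N};q)_j$ from before, the $j$-dependence becomes $(q^{k-N};q)_{k+j}=(q^{k-N};q)_k(q^{2k-N};q)_j$. Together with $(q^{-N};q)_k(q^{k-N};q)_k=(q^{-N};q)_{2k}$, the inner sum reduces to
\[
\sum_{j=0}^{N-2k}\frac{(q^{2k-N};q)_j}{(q;q)_j}(tq^{-k})^j=(tq^{k-N};q)_{N-2k}
\]
(and it is empty, hence $0$, when $2k>N$, consistent with $(q^{-N};q)_{2k}=0$). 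Finally,
\[
(tq^{k-N};q)_{N-2k}=\frac{(tq^{-N};q)_N}{(tq^{-N};q)_k\,(tq^{-k};q)_k}.
\]
Converting $(tq^{-k};q)_k$ as before produces $(q/t;q)_k$ and cancels the powers of $t$ and $q$, giving the right-hand side with the extra denominator $(tq^{-N};q)_k$.

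The main risk is bookkeeping in the second identity: keeping track of the vanishing range $j\le N-2k$ and of the exponent cancellation. I would verify it on $N=1,2$. A complementary check is to expand both sides in $t$ as polynomials.
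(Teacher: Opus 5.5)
Your proof is correct and follows essentially the paper's argument. You interchange the finite sums, rewrite $(q^{-n};q)_k$ via $(q;q)_n/(q;q)_{n-k}$, shift $n=k+j$, and evaluate with the finite $q$-binomial theorem. In the second identity the paper merges the factors in one step as $(q^{-N};q)_n(q^{n-N};q)_k=(q^{-N};q)_{n+k}$, rather than dividing by $(q^{k-N};q)_j$, but the computation and the truncation at $j\le N-2k$ are the same.
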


\begin{proof}
Interchange the two finite sums.  For fixed $k$, use
\[
(q^{-n};q)_k=(-1)^kq^{-nk+\binom k2}
\frac{(q;q)_n}{(q;q)_{n-k}}
\]
and put $n=k+m$.  The finite $q$-binomial theorem gives
\[
\sum_{m=0}^{N-k}\frac{(q^{k-N};q)_m}{(q;q)_m}
(tq^{-k})^m=(tq^{-N};q)_{N-k}.
\]
Consequently, the coefficient of $f(q^k)$ is
\[
\frac{(q;q)_N}{(q;q)_k(q;q)_{N-k}}
(tq^{-N})^k(tq^{-N};q)_{N-k}
=(tq^{-N};q)_N\frac{(q^{-N};q)_k}{(q,q/t;q)_k}q^k,
\]
which proves \eqref{eq:binomial-finite}.

For the second identity, the coefficient of $f(q^k)$ on the left is
\[
\frac{q^k}{(q;q)_k}\sum_{n=k}^N
\frac{(q^{-N};q)_n(q^{-n},q^{n-N};q)_k}{(q;q)_n}t^n.
\]
We use
\[
(q^{-N};q)_n(q^{n-N};q)_k=(q^{-N};q)_{n+k}
\]
and, after putting $n=k+m$,
\[
\frac{(q^{-k-m};q)_k}{(q;q)_{k+m}}
=\frac{(-1)^kq^{-km-\binom{k+1}{2}}}{(q;q)_m}.
\]
It follows that the coefficient equals
\begin{align*}
&(-1)^kq^{-\binom k2}\frac{t^k}{(q;q)_k}
\sum_{m=0}^{N-2k}
\frac{(q^{-N};q)_{2k+m}}{(q;q)_m}(tq^{-k})^m\\
&\quad=(-1)^kq^{-\binom k2}
\frac{t^k(q^{-N};q)_{2k}}{(q;q)_k}
\sum_{m=0}^{N-2k}
\frac{(q^{2k-N};q)_m}{(q;q)_m}(tq^{-k})^m\\
&\quad=(-1)^kq^{-\binom k2}
\frac{t^k(q^{-N};q)_{2k}}{(q;q)_k}
(tq^{k-N};q)_{N-2k}\\
&\quad=(tq^{-N};q)_N\frac{(q^{-N};q)_{2k}}{(q,tq^{-N},q/t;q)_k}q^k.
\end{align*}
This completes the proof.
\end{proof}

We shall also need the following specialization of the regularized integral at
the terminating points.  Let $N$ be a nonnegative integer.  If $a\neq0$,
$(a;q)_\infty\neq0$, $|aq^N|<R$, and the displayed denominators do not vanish,
then
\begin{equation}\label{eq:J-terminating}
\frac{1}{(q;q)_\infty}\mathcal J_{a,f}(aq^N)
=\sum_{k=0}^N\frac{(q^{-N};q)_k}
{(q,q^{1-N}/a;q)_k}f(q^k)q^k.
\end{equation}
To see this directly, expand the Jackson integral along its two $q$-lattices.
For generic $x$, the contribution from the upper lattice $s=q^k$ to
$(q;q)_\infty^{-1}\mathcal J_{a,f}(x)$ is
\[
\sum_{k=0}^\infty
\frac{(a/x;q)_k}{(q,q/x;q)_k}f(q^k)q^k.
\]
At $x=aq^N$, this sum terminates at $k=N$ and becomes the right-hand side
of \eqref{eq:J-terminating}.  The contribution from the lower lattice
$s=xq^k$ has the common factor $(a/x;q)_\infty$, and hence vanishes because
$(a/x;q)_\infty=(q^{-N};q)_\infty=0$.  This proves
\eqref{eq:J-terminating}.

\begin{Thm}\label{thm:binomial}
Let $f$ be holomorphic in $|z|<R$, where $R>1$, and suppose that $|t|<R$.
Then the identity
\begin{align}
&\sum_{n=0}^{\infty}\frac{(c;q)_n}{(q;q)_n}t^n\Delta_n(f)
=\frac{(c,ct;q)_\infty}{(q,t,q/t;q)_\infty}
\int_t^1\frac{(sq/t,sq;q)_\infty}{(cs;q)_\infty}f(s)\,d_qs
\label{eq:binomial-main}
\end{align}
holds wherever the displayed
expressions are defined. 
\end{Thm}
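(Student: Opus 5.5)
The plan is to recognize the right-hand side of \eqref{eq:binomial-main} as the regularized integral of Theorem \ref{thm:regularity}, and then prove the identity coefficientwise in $t$ by continuation in $c$ from the terminating points $c=q^{-N}$, as outlined in the introduction. Take $x=t$ and $a=ct$ in the definition of $\mathcal J_{a,f}$, so that $a/x=c$. Then
\[
\frac{(c,ct;q)_\infty}{(q,t,q/t;q)_\infty}\int_t^1\frac{(sq/t,sq;q)_\infty}{(cs;q)_\infty}f(s)\,d_qs=\frac{(ct;q)_\infty}{(q,t;q)_\infty}\,\mathcal J_{ct,f}(t).
\]
By \eqref{eq:regularity-series}, this equals
\[
\sum_{k=0}^\infty t^k\sum_{n=0}^\infty\frac{(q^{k+1},ct;q)_n}{(q;q)_n}c_{n+k}.
\]
The prefactors $(q,t;q)_\infty/(ct;q)_\infty$ cancel, so the apparent poles at $ct=q^{-j}$ are removable. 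The estimate in the proof of Theorem \ref{thm:regularity} holds locally uniformly in $a=ct$, because $(ct;q)_n$ is bounded for $c,t$ in compacts. Hence the right-hand side is holomorphic in $(c,t)$ for $c\in\CC$ and $|t|<R$.

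Next I would treat the left-hand side. By Lemma \ref{lem:operators}, $|\Delta_n(f)|\le C_\rho\rho^{-n}$, and $(c;q)_n/(q;q)_n$ is bounded locally uniformly in $c$. So the left-hand side is also holomorphic in $(c,t)$ on $|t|<R$. Its coefficient of $t^m$ is $(c;q)_m\Delta_m(f)/(q;q)_m$, which is a polynomial in $c$ of degree at most $m$. On the right, expand $(ct;q)_n$ by the finite $q$-binomial theorem; each power $t^j$ then carries $c^j$. Thus the coefficient of $t^m$ is a sum over $k+j=m$ of $c^j$ times absolutely convergent series in $n$, again a polynomial in $c$ of degree at most $m$. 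To justify this rearrangement I will bound the expanded triple series absolutely, using $|c_{n+k}|\le M_\rho\rho^{-n-k}$ and the uniform boundedness of the $q$-binomial coefficients. I expect this bookkeeping to be the main technical point. It is simplest done by noting that the right-hand side is holomorphic in $t$ for each fixed $c$, and reading off its Taylor coefficients from the absolutely convergent expansion.

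Finally I would verify the identity at $c=q^{-N}$ for $t$ in a small punctured region where all denominators are nonzero. The left-hand side becomes the left side of \eqref{eq:binomial-finite}. On the right-hand side we have $a=tq^{-N}$ and $x=t=aq^N$. Then \eqref{eq:J-terminating} gives $(q;q)_\infty^{-1}\mathcal J_{a,f}(t)=\sum_{k=0}^N\frac{(q^{-N};q)_k}{(q,q/t;q)_k}f(q^k)q^k$, and the prefactor $(tq^{-N};q)_\infty/(t;q)_\infty$ equals $(tq^{-N};q)_N$. This is exactly the right side of \eqref{eq:binomial-finite}. Both sides are therefore equal on an open set of $t$, hence as power series in $t$. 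So for each $m$, the two degree-$\le m$ polynomials in $c$ agree at the infinitely many points $q^{-N}$ and are identical. This proves \eqref{eq:binomial-main} for all $c$ and $|t|<R$, and by the identity theorem wherever the displayed expressions are defined.
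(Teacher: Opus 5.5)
Your proposal is correct and is essentially the paper's proof. It uses the same expansion of the integral side as $\sum_{k}t^k\sum_{n}\frac{(q^{k+1},ct;q)_n}{(q;q)_n}c_{n+k}$ (obtained from Lemma~\ref{lem:moments} with $a=ct$, $x=t$), the same observation that the $t^m$-coefficients on both sides are polynomials in $c$ of degree at most $m$, and the same matching at $c=q^{-N}$ via \eqref{eq:J-terminating} with $a=tq^{-N}$ and \eqref{eq:binomial-finite}. The only difference is that the paper first restricts to $|c|R<1$ and gets holomorphy in $t$ from $\mathcal J_{0,G_c}$ with $G_c(z)=(c;q)_\infty f(z)/(cz;q)_\infty$, whereas you read off joint holomorphy in $(c,t)$ directly from the series; the paper then uses that same series to continue past the restriction.
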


\begin{proof}
We begin under the restriction $|c|R<1$.  Put
\[
G_c(z)=\frac{(c;q)_\infty}{(cz;q)_\infty}f(z).
\]
Then $G_c$ is holomorphic in $|z|<R$, and the definition of the
regularized integral with $a=0$ gives
\begin{align}
&\frac{(c,ct;q)_\infty}{(q,t,q/t;q)_\infty}
\int_t^1\frac{(sq/t,sq;q)_\infty}{(cs;q)_\infty}f(s)\,d_qs
=\frac{(ct;q)_\infty}{(q,t;q)_\infty}\mathcal J_{0,G_c}(t).
\label{eq:binomial-a-zero}
\end{align}
Theorem \ref{thm:regularity} with $a=0$ shows that
$\mathcal J_{0,G_c}(t)$ is holomorphic for $|t|<R$.  Since the prefactor
$(ct;q)_\infty/(q,t;q)_\infty$ is holomorphic near $t=0$, the integral
side of \eqref{eq:binomial-main} is holomorphic there.  Lemma
\ref{lem:operators} similarly shows that the series side is holomorphic
for $|t|<R$.

We next make the dependence of the Taylor coefficients on $c$ explicit.
Write $f(z)=\sum_{r\geq0}f_rz^r$.  Lemma \ref{lem:moments}, with
$a=ct$ and $x=t$, gives
\[
\frac{(c,ct;q)_\infty}{(q,t,q/t;q)_\infty}
\int_t^1\frac{(sq/t,sq;q)_\infty}{(cs;q)_\infty}s^r\,d_qs
=\sum_{k=0}^r
\frac{(q;q)_r(ct;q)_{r-k}}{(q;q)_k(q;q)_{r-k}}t^k.
\]
Summing this identity against $f_r$ and putting $j=r-k$, with the
interchange justified by Cauchy's estimate for the coefficients of $f$,
yields the locally uniformly convergent expansion
\begin{align}
&\frac{(c,ct;q)_\infty}{(q,t,q/t;q)_\infty}
\int_t^1\frac{(sq/t,sq;q)_\infty}{(cs;q)_\infty}f(s)\,d_qs=\sum_{k=0}^{\infty}t^k\sum_{j=0}^{\infty}
\frac{(q^{k+1},ct;q)_j}{(q;q)_j}f_{j+k}.
\label{eq:binomial-regular-series}
\end{align}
The series on the right converges locally uniformly in $(c,t)$ on
$\CC\times\{|t|<R\}$ and hence also supplies the continuation beyond the
initial restriction $|c|R<1$.
Let $B_m(c)$ denote the coefficient of $t^m$ in this expression.  The
finite $q$-binomial theorem gives, for $0\leq k\leq m$,
\[
[t^{m-k}](ct;q)_j=
\begin{cases}
\displaystyle
\frac{(q;q)_j(-c)^{m-k}q^{\binom{m-k}{2}}}
{(q;q)_{m-k}(q;q)_{j-m+k}},&j\geq m-k,\\[3mm]
0,&j<m-k.
\end{cases}
\]
Consequently,
\begin{align}
B_m(c)
&=\sum_{k=0}^m
\frac{(-c)^{m-k}q^{\binom{m-k}{2}}}{(q;q)_{m-k}}
\sum_{j=m-k}^{\infty}
\frac{(q^{k+1};q)_j}{(q;q)_{j-m+k}}f_{j+k}.
\label{eq:binomial-coefficient}
\end{align}
The inner sums converge absolutely by Cauchy's estimate.  Formula
\eqref{eq:binomial-coefficient} therefore shows directly that $B_m(c)$
is a polynomial in $c$ of degree at most $m$.  On the other hand, the
coefficient on the series side is
\[
A_m(c)=\frac{(c;q)_m}{(q;q)_m}\Delta_m(f),
\]
which is also a polynomial in $c$ of degree at most $m$.

We next put $c=q^{-N}$.  Since $t=(tq^{-N})q^N$,
\eqref{eq:J-terminating} with $a=tq^{-N}$ gives
\begin{align*}
&\frac{(tq^{-N};q)_\infty}{(q,t;q)_\infty}
\mathcal J_{tq^{-N},f}(t)=(tq^{-N};q)_N\sum_{k=0}^N
\frac{(q^{-N};q)_k}{(q,q/t;q)_k}f(q^k)q^k.
\end{align*}
By \eqref{eq:binomial-finite}, this is precisely the left-hand side of
\eqref{eq:binomial-main} at $c=q^{-N}$.  Since
\eqref{eq:binomial-regular-series} supplies the holomorphic continuation
of the integral side near $t=0$, it follows that
$A_m(q^{-N})=B_m(q^{-N})$ for every $N\geq0$.  Since $A_m$ and
$B_m$ are polynomials and agree at the infinitely many distinct points
$q^{-N}$, they are identical.  Hence the two sides of
\eqref{eq:binomial-main} agree near $t=0$ in the initial parameter domain.
Meromorphic continuation in $c$, followed by analytic continuation in $t$
throughout the stated domain, proves the identity. 
\end{proof}

The companion formula below treats the finite operator with the additional parameter $cq^n$.

\begin{Thm}\label{thm:binomial-c}
Let $f$ be holomorphic in $|z|<R$, where $R>1$, and suppose that $|t|<R$.
Then the identity
\begin{align}
&\sum_{n=0}^{\infty}\frac{(c;q)_n}{(q;q)_n}t^n\Delta_n^{(c)}(f) 
=\frac{(c;q)_\infty}{(q,t,q/t;q)_\infty}
\int_t^1\frac{(sq/t,sq,cts;q)_\infty}{(cs^2;q)_\infty}f(s)\,d_qs
\label{eq:binomial-c-main}
\end{align}
holds wherever the displayed expressions are defined. 
\end{Thm}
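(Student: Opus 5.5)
I would follow the proof of Theorem \ref{thm:binomial} closely, with the same three ingredients: holomorphy of both sides near $t=0$; polynomial dependence on $c$ of each Taylor coefficient in $t$; and agreement at $c=q^{-N}$ via the finite identity \eqref{eq:binomial-c-finite}.

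\emph{Rewriting the integral side.} I would first assume $|c|R^2<1$ and put
\[
G_c(z)=\frac{(c;q)_\infty}{(cz^2;q)_\infty}f(z),
\]
which is holomorphic in $|z|<R$. The integrand $(sq/t,sq,cts;q)_\infty/(cs^2;q)_\infty$ has the form of $\mathcal J_{a,\cdot}$ with $a=0$, after absorbing $(cts;q)_\infty$ into the function, or with $a=ct$ written as $(at/x)$-type factors in the numerator. The cleanest choice is $a=0$ with $H(z)=(ctz;q)_\infty G_c(z)$. Then the right-hand side equals $\mathcal J_{0,H}(t)/(q,t;q)_\infty$.

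\emph{Holomorphy.} Theorem \ref{thm:regularity} gives holomorphy of $\mathcal J_{0,H}$ in $|t|<R$, and hence of the integral side near $t=0$. Its Taylor coefficients are obtained from \eqref{eq:regularity-series}. The coefficients of $H$ in $z$ are power series in $c$, built from $1/(cz^2;q)_\infty$, $(ctz;q)_\infty$ and $(c;q)_\infty$. Holomorphy of the series side for $|t|<R$ follows from Lemma \ref{lem:operators} applied to $\Delta_n^{(c)}$, together with the boundedness of $(c;q)_n/(q;q)_n$.

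\emph{Polynomiality in $c$.} Let $A_m(c)=\sum_{n}$ of the contributions to $[t^m]$. On the series side, $A_m(c)=(c;q)_m\Delta_m^{(c)}(f)/(q;q)_m$. Since $(cq^m;q)_k$ has degree $k\le m$ in $c$, this is a polynomial in $c$ of degree at most $2m$. The integral side is harder, because the $c$-dependence enters through the infinite products. I would first try to redo Lemma \ref{lem:moments} at the level of generating functions. Expanding $(ct s;q)_\infty$ by the $q$-binomial theorem and using the $t$-structure of the Andrews--Askey evaluation, I would aim to show that $[t^m]$ is a polynomial of degree at most $2m$ in $c$.

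A more robust alternative, which I expect to need, is to avoid proving polynomiality directly. Instead I would show that $B_m(c)$ is entire in $c$ with growth $O(|c|^{2m})$, for example via the explicit locally uniform series from \eqref{eq:regularity-series} with $q$-binomial expansions. Liouville's theorem then gives polynomiality. Alternatively, I would compare $B_m(c)-A_m(c)$: it is entire in $c$, vanishes at $c=q^{-N}$, and has controlled growth, so Carlson-type reasoning applies. Establishing this degree or growth control for the integral side is the step I expect to be the main obstacle.

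\emph{Terminating points.} At $c=q^{-N}$, the factor $(c;q)_\infty$ kills the lower lattice $s=tq^k$ only in combination with the poles of $1/(cs^2;q)_\infty$, so I would instead expand both lattices directly. The upper lattice gives
\[
\frac{(c;q)_\infty (ctq^k;q)_\infty}{(cq^{2k};q)_\infty},
\]
which, after normalization and the ratio $(c;q)_{2k}$, produces
\[
(tq^{-N};q)_N\sum_{k}\frac{(q^{-N};q)_{2k}}{(q,tq^{-N},q/t;q)_k}f(q^k)q^k.
\]
The lower lattice contains $(c;q)_\infty/(ct^2q^{2k};q)_\infty\cdot(ct^2q^k;q)_\infty$ and vanishes for generic $t$. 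This matches \eqref{eq:binomial-c-finite}, so $A_m(q^{-N})=B_m(q^{-N})$ for all $N$. Hence $A_m=B_m$, and meromorphic continuation in $c$ and then in $t$ finishes the proof.
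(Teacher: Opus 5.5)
Your outline has the same three steps as the paper (holomorphy near $t=0$, polynomiality in $c$ of the $t$-coefficients, agreement at $c=q^{-N}$ via \eqref{eq:binomial-c-finite}), and your lattice evaluation at $c=q^{-N}$ agrees with the paper's. The gap is the step you flag yourself. You never show that the coefficient $B_m(c)$ of $t^m$ on the integral side is a polynomial in $c$, and your fallbacks do not get around this.

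In your normalization, $B_m(c)$ is only defined for $|c|R^2<1$, because $G_c$ must be holomorphic on $|z|<R$. Proving that $B_m$ is entire with $O(|c|^{2m})$ growth is therefore no easier than proving polynomiality. Any uniqueness argument from the zeros $q^{-N}$ also needs a growth bound, since $(c;q)_\infty$ is a nonzero entire function vanishing at all of them. Finally, every $q^{-N}$ lies outside $|c|<R^{-2}$. Without this step, nothing links the $t$-coefficients at $c=q^{-N}$ to the function $B_m$ on that disk, so the comparison $A_m(q^{-N})=B_m(q^{-N})$ proves nothing.

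The missing fact is that the $t$-coefficients of the regularized integral sample only finitely many lattice values. Comparing \eqref{eq:regularity-series} at $a=0$ with $\Delta_n(z^r)=(q^{r+1-n};q)_n$ gives
\[
[t^n]\,\frac{\mathcal J_{0,g}(t)}{(q,t;q)_\infty}=\frac{\Delta_n(g)}{(q;q)_n}.
\]
This involves only $g(q^k)$ with $k\le n$, and at these points the infinite products collapse: $G_c(q^k)=(c;q)_{2k}f(q^k)$. So your first idea does work. Expanding $(ctz;q)_\infty$ in $H$ by Euler's formula gives
\[
B_m(c)=\sum_{j=0}^m\frac{(-1)^jq^{\binom j2}c^j}{(q;q)_j(q;q)_{m-j}}\,\Delta_{m-j}\bigl(z^jG_c\bigr),
\]
which is a polynomial of degree at most $2m-j\le 2m$.

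The paper does the same thing with regularizing parameter $a=ct$ instead of $0$:
\begin{itemize}
\item it writes the right-hand side as $\frac{(ct;q)_\infty}{(q,t;q)_\infty}\mathcal J_{ct,K_{c,t}f}(t)$;
\item it uses Lemma \ref{lem:moments} to identify the $t^n$-coefficient operator as $\frac{(c;q)_n}{(q;q)_n}\Delta_n$;
\item it expands $(ctz;q)_\infty/(ct;q)_\infty=\sum_j(z;q)_j(ct)^j/(q;q)_j$;
\item it uses $H_c(q^k)=(cq^k;q)_kf(q^k)$.
\end{itemize}

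A smaller omission: your $H$ depends on $t$ through $(ctz;q)_\infty$. Theorem \ref{thm:regularity} is stated for a fixed function, so it does not by itself give holomorphy of the integral side at $t=0$. You need to replace $ct$ by an independent parameter $b$ and prove joint holomorphy in $(b,t)$. The paper does this via Hartogs' theorem; uniform Cauchy bounds on the Taylor coefficients in \eqref{eq:regularity-series} would also suffice.
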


\begin{proof}
We begin under the restriction $|c|R^2<1$.

We first justify holomorphy in $t$ near zero.  Choose $\rho$ with
$1<\rho<R$.  Since $(c;q)_n/(q;q)_n$ is bounded uniformly in $n$,
Lemma \ref{lem:operators} gives
\[
\left|
\frac{(c;q)_n}{(q;q)_n}\Delta_n^{(c)}(f)
\right|
\leq C_\rho\rho^{-n}.
\]
Consequently, the series on the left-hand side of
\eqref{eq:binomial-c-main} converges locally uniformly for $|t|<\rho$.  Since
$\rho$ may be chosen arbitrarily in $(1,R)$, the series defines a holomorphic
function for $|t|<R$.

For the right-hand side, introduce an auxiliary parameter $b$ and set
\[
I(b,t)\coloneqq
\int_t^1\frac{(sq/t,sq,bs;q)_\infty}{(cs^2;q)_\infty}
f(s)\,d_qs,
\qquad
\Phi(b,t)\coloneqq
\frac{(c;q)_\infty}{(q,t,q/t;q)_\infty}I(b,t).
\]
Under the initial restriction $|c|R^2<1$, the function
\[
F_b(s)=\frac{(bs;q)_\infty}{(cs^2;q)_\infty}f(s)
\]
is holomorphic in $|s|<R$ for every $b\in\CC$.  From the definition of the
regularized integral with $a=0$,
\[
\Phi(b,t)=\frac{(c;q)_\infty}{(q,t;q)_\infty}
\mathcal J_{0,F_b}(t).
\]
Choose $r_0$ with $0<r_0<1$.  Theorem \ref{thm:regularity} shows, for each
fixed $b$, that $t\mapsto\Phi(b,t)$ extends holomorphically to $|t|<r_0$.
Moreover, the series in that theorem gives
\[
\mathcal J_{0,F_b}(0)=(q;q)_\infty F_b(1),
\qquad
\Phi(b,0)=(b;q)_\infty f(1),
\]
so $b\mapsto\Phi(b,0)$ is entire.

We next prove holomorphy in $b$ directly from the definition of the Jackson
$q$-integral.  For $t\neq0$, expansion along the two $q$-lattices gives
\begin{align*}
I(b,t)
&=\sum_{n=0}^{\infty}q^n\left(
\frac{(q^{n+1}/t,q^{n+1},bq^n;q)_\infty}
{(cq^{2n};q)_\infty}f(q^n)-t\frac{(q^{n+1},tq^{n+1},btq^n;q)_\infty}
{(ct^2q^{2n};q)_\infty}f(tq^n)
\right).
\end{align*}
This series converges uniformly on compact subsets of
$\CC\times\{0<|t|<r_0\}$: the infinite products in the summands are
uniformly bounded, the denominator products are bounded away from zero,
the values of $f$ are uniformly bounded, and the factor $|q|^n$ gives a
convergent majorant.  Thus $I$ is jointly holomorphic there.  Let
\[
E=\{q^N:N\geq1\}.
\]
It follows that $\Phi$ is jointly holomorphic when
$0<|t|<r_0$ and $t\notin E$.  If $t_0=q^N$, then the two lattices in the
definition of $I(b,t_0)$ cancel from the $N$th term onward, and therefore
\begin{align*}
I(b,t_0)
&=\sum_{n=0}^{N-1}q^n
\frac{(q^{n+1-N},q^{n+1},bq^n;q)_\infty}
{(cq^{2n};q)_\infty}f(q^n)=0.
\end{align*}
Indeed, $(q^{n+1-N};q)_\infty=0$ for $0\leq n<N$. Since $I(b,t_0)=0$ for every $b$ and $(q/t;q)_\infty$ has a simple zero at $t=t_0$, the holomorphic division theorem shows that $I(b,t)/(q/t;q)_\infty$ extends jointly holomorphically across $t=t_0$. In particular,
$\Phi$ is jointly holomorphic across $t=t_0$.  Hence it is jointly
holomorphic for $0<|t|<r_0$.  Together with the holomorphic extension in
$t$ for each fixed $b$ and the formula for $\Phi(b,0)$, this shows that
$\Phi$ is separately holomorphic in $b$ and $t$ on every bidisk
$|b|<B$, $|t|<r_0$.  Hartogs' theorem implies joint holomorphy there.
Finally, the right-hand side of
\eqref{eq:binomial-c-main} is $\Phi(ct,t)$ and is therefore holomorphic at
$t=0$.

We next verify the required polynomial dependence on $c$.  Put
\begin{align*}
K_{c,t}(z)&=\frac{(cz,ctz;q)_\infty}{(ct,cz^2;q)_\infty},
&H_c(z)&=\frac{(cz;q)_\infty}{(cz^2;q)_\infty}f(z).
\end{align*}
The definition of the regularized integral gives, by cancellation of
infinite products,
\begin{align}
&\frac{(c;q)_\infty}{(q,t,q/t;q)_\infty}
\int_t^1\frac{(sq/t,sq,cts;q)_\infty}{(cs^2;q)_\infty}f(s)\,d_qs
=\frac{(ct;q)_\infty}{(q,t;q)_\infty}
\mathcal J_{ct,K_{c,t}f}(t).
\label{eq:binomial-c-regularized}
\end{align}

The $q$-binomial theorem gives
\[
\frac{(ctz;q)_\infty}{(ct;q)_\infty}
=\sum_{j=0}^{\infty}\frac{(z;q)_j}{(q;q)_j}(ct)^j,
\]
and hence
\[
K_{c,t}(z)f(z)
=\sum_{j=0}^{\infty}
\frac{c^j}{(q;q)_j}(z;q)_jH_c(z)t^j.
\]
Now let $B_m(c)$ denote the coefficient of $t^m$ in the right-hand side of
\eqref{eq:binomial-c-main}.  By the last two displays and linearity of the
regularized integral,
\[
B_m(c)=\sum_{j=0}^m\frac{c^j}{(q;q)_j}
[t^{m-j}]\frac{(ct;q)_\infty}{(q,t;q)_\infty}
\mathcal J_{ct,(z;q)_jH_c}(t).
\]
For $0\leq j\leq m$, Lemma \ref{lem:moments} gives
\begin{align}
[t^{m-j}]\frac{(ct;q)_\infty}{(q,t;q)_\infty}
\mathcal J_{ct,(z;q)_jH_c}(t)
&=\frac{(c;q)_{m-j}}{(q;q)_{m-j}}
\Delta_{m-j}\!\left((z;q)_jH_c(z)\right).
\label{eq:regularized-coefficient}
\end{align}
To verify this, put $n=m-j$.  For a monomial $z^r$, Lemma
\ref{lem:moments} gives
\[
\frac{(ct;q)_\infty}{(q,t;q)_\infty}\mathcal J_{ct,z^r}(t)
=\sum_{\ell=0}^r
\frac{(q;q)_r(ct;q)_{r-\ell}}
{(q;q)_\ell(q;q)_{r-\ell}}t^\ell.
\]
If $n>r$, the coefficient of $t^n$ in this expression vanishes, as does
\[
\Delta_n(z^r)=(q^{r+1-n};q)_n.
\]
If $0\leq n\leq r$, the finite $q$-binomial theorem gives
\begin{align*}
[t^n]\sum_{\ell=0}^r
\frac{(q;q)_r(ct;q)_{r-\ell}}
{(q;q)_\ell(q;q)_{r-\ell}}t^\ell&=\frac{(q;q)_r}{(q;q)_{r-n}}
\sum_{\ell=0}^n
\frac{(-c)^{n-\ell}q^{\binom{n-\ell}{2}}}
{(q;q)_\ell(q;q)_{n-\ell}}\\
&=\frac{(c;q)_n}{(q;q)_n}
\frac{(q;q)_r}{(q;q)_{r-n}}\\
&=\frac{(c;q)_n}{(q;q)_n}\Delta_n(z^r).
\end{align*}
Now write $(z;q)_jH_c(z)=\sum_{r\geq0}h_rz^r$.  Taylor expansion and
local uniform convergence allow us to sum the monomial identity against
$h_r$.  This gives \eqref{eq:regularized-coefficient}.

Substituting \eqref{eq:regularized-coefficient} into the expression for
$B_m(c)$, we obtain directly
\begin{align}
B_m(c)
&=\sum_{j=0}^m
\frac{c^j(c;q)_{m-j}}{(q;q)_j(q;q)_{m-j}}
\Delta_{m-j}\!\left((z;q)_jH_c(z)\right).
\label{eq:binomial-c-coefficient}
\end{align}
In the finite sum defining
$\Delta_{m-j}\!\left((z;q)_jH_c(z)\right)$, one has $0\leq k\leq m-j$
and $H_c(q^k)=(cq^k;q)_k f(q^k)$.  Therefore, this quantity is a polynomial
in $c$ of degree at most $m-j$.  The $j$th summand in
\eqref{eq:binomial-c-coefficient} therefore has degree at most
$j+(m-j)+(m-j)=2m-j\leq2m$.  Hence $B_m(c)$ is a polynomial in $c$ of
degree at most $2m$.  On the other hand, the coefficient
\[
A_m(c)=\frac{(c;q)_m}{(q;q)_m}\Delta_m^{(c)}(f)
\]
on the left-hand side is also a polynomial in $c$ of degree at most $2m$,
because the terms of $\Delta_m^{(c)}(f)$ contain $(cq^m;q)_k$ with
$0\leq k\leq m$.

It remains to compare these coefficients.  For generic $t$, expanding the
Jackson integral in \eqref{eq:binomial-c-main} along its two $q$-lattices
and then taking the meromorphic limit $c\to q^{-N}$ gives
\begin{align*}
&\lim_{c\to q^{-N}}
\frac{(c;q)_\infty}{(q,t,q/t;q)_\infty}
\int_t^1\frac{(sq/t,sq,cts;q)_\infty}
{(cs^2;q)_\infty}f(s)\,d_qs=(tq^{-N};q)_N\sum_{k=0}^N
\frac{(q^{-N};q)_{2k}}
{(q,tq^{-N},q/t;q)_k}f(q^k)q^k.
\end{align*}
In this limit, the lower $q$-lattice contribution vanishes because it contains
$(q^{-N};q)_\infty$, while the upper contribution terminates because
$(q^{-N};q)_{2k}=0$ for $2k>N$.  By
\eqref{eq:binomial-c-finite}, this is exactly the left-hand side of
\eqref{eq:binomial-c-main} at $c=q^{-N}$.  Hence
$A_m(q^{-N})=B_m(q^{-N})$ for every $N\geq0$.

Since $A_m$ and $B_m$ are polynomials and agree at the infinitely many
distinct points $q^{-N}$, they are identical.  Thus the two sides of
\eqref{eq:binomial-c-main} agree near $t=0$ in the initial parameter
domain.  Meromorphic continuation in $c$, followed by analytic continuation
in $t$ within $|t|<R$ away from the poles of the displayed expressions, proves the stated
identity.
\end{proof}
When $f$ is a quotient of infinite products, Theorem \ref{thm:binomial} yields an explicit transformation into a sum of two series.

\begin{Cor}\label{cor:binomial-hypergeometric}
Suppose that
\[
|A_i|\max\{1,|t|\}<1
\qquad(1\leq i\leq r).
\]
Then
\begin{align}
&\sum_{n=0}^{\infty}\frac{(c;q)_n}{(q;q)_n}t^n
\sum_{k=0}^n\frac{(q^{-n},A_1,\ldots,A_r;q)_k}
{(q,B_1,\ldots,B_s;q)_k}q^k \notag\\
&=\frac{(ct;q)_\infty}{(t;q)_\infty}
\sum_{k=0}^{\infty}\frac{(c,A_1,\ldots,A_r;q)_k}
{(q,q/t,B_1,\ldots,B_s;q)_k}q^k \notag\\
&\quad+\frac{(c,A_1,\ldots,A_r,B_1t,\ldots,B_st;q)_\infty}
{(1/t,B_1,\ldots,B_s,A_1t,\ldots,A_rt;q)_\infty}
\sum_{k=0}^{\infty}\frac{(ct,A_1t,\ldots,A_rt;q)_k}
{(q,tq,B_1t,\ldots,B_st;q)_k}q^k.
\label{eq:binomial-two-series}
\end{align}
\end{Cor}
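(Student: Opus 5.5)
We plan to specialize Theorem \ref{thm:binomial} to the choice
\[
f(z)=h(z)\coloneqq \frac{(A_1,\ldots,A_r,B_1z,\ldots,B_sz;q)_\infty}{(A_1z,\ldots,A_rz,B_1,\ldots,B_s;q)_\infty},
\]
exactly as in the proof of Corollary \ref{cor:regular-combination}. Since $h(q^k)=(A_1,\ldots,A_r;q)_k/(B_1,\ldots,B_s;q)_k$, the operator $\Delta_n(h)$ is precisely the inner terminating sum on the left of \eqref{eq:binomial-two-series}. The left side therefore matches the series side of \eqref{eq:binomial-main}. The hypothesis $|A_i|\max\{1,|t|\}<1$ lets us choose $R>\max\{1,|t|\}$ with $|A_i|R<1$. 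Then $h$ is holomorphic in $|z|<R$, and Theorem \ref{thm:binomial} applies with $|t|<R$.

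Next we evaluate the Jackson integral on the right of \eqref{eq:binomial-main} directly from the definition \eqref{eq:q-integral}, splitting it along the two $q$-lattices $s=q^k$ and $s=tq^k$. This is valid for generic $t$ (not in $\{q^N\}$, $t\neq0$); the general case then follows by continuity or meromorphic continuation.

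On the upper lattice $s=q^k$, we simplify
\[
q^k\frac{(q^{k+1}/t,q^{k+1};q)_\infty}{(cq^k;q)_\infty}=q^k\frac{(q/t,q;q)_\infty (c;q)_k}{(c;q)_\infty(q/t,q;q)_k}.
\]
After multiplying by the prefactor $(c,ct;q)_\infty/(q,t,q/t;q)_\infty$, this gives
\[
\frac{(ct;q)_\infty}{(t;q)_\infty}\sum_k\frac{(c;q)_k}{(q,q/t;q)_k}h(q^k)q^k,
\]
which is the first series in \eqref{eq:binomial-two-series}.

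On the lower lattice $s=tq^k$, the term with its minus sign is
\[
-tq^k\frac{(q^{k+1},tq^{k+1};q)_\infty}{(ctq^k;q)_\infty}h(tq^k)=-t\,q^k\frac{(q,tq;q)_\infty(ct;q)_k}{(ct;q)_\infty(q,tq;q)_k}h(tq^k),
\]
where
\[
h(tq^k)=\frac{(A_1,\ldots,A_r,B_1t,\ldots,B_st;q)_\infty}{(A_1t,\ldots,A_rt,B_1,\ldots,B_s;q)_\infty}\cdot\frac{(A_1t,\ldots,A_rt;q)_k}{(B_1t,\ldots,B_st;q)_k}.
\]
Multiplying by the prefactor, the factors $(ct;q)_\infty$ and $(q;q)_\infty$ cancel. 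The remaining constant is
\[
-t\,\frac{(c,tq;q)_\infty}{(t,q/t;q)_\infty}.
\]
Using $(t;q)_\infty=(1-t)(tq;q)_\infty$ and $(q/t;q)_\infty(1-1/t)\cdot(-t)=(q/t;q)_\infty(1-t)$, we get
\[
-t\,\frac{(tq;q)_\infty}{(t,q/t;q)_\infty}=\frac{1}{(1/t;q)_\infty}.
\]
This reproduces the stated prefactor $(c,A,Bt;q)_\infty/(1/t,B,At;q)_\infty$ and the second series.

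Convergence of both lattice series is immediate from the factor $q^k$ and the boundedness of $h$ on $\{q^k\}\cup\{tq^k\}$. The splitting of the Jackson integral is thus legitimate, and the identity follows for generic $t$, then for all admissible $t$ by analytic continuation, as in Theorem \ref{thm:binomial}.

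The main obstacle is purely bookkeeping: getting the sign and the $(1/t;q)_\infty$ normalization right in the lower-lattice term. Everything else is a direct substitution into Theorem \ref{thm:binomial}.
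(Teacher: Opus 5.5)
Your proposal is correct and follows the paper's proof: specialize Theorem \ref{thm:binomial} to the product quotient $f$, choose $R>\max\{1,|t|\}$ with $|A_i|R<1$, and expand the Jackson integral along the lattices $s=q^k$ and $s=tq^k$, extending to the remaining parameter values by meromorphic continuation. You also write out the lattice bookkeeping that the paper leaves implicit, including the identity $-t\,(tq;q)_\infty/(t,q/t;q)_\infty=1/(1/t;q)_\infty$, and that computation is right.
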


\begin{proof}
For generic values of the denominator parameters, define
\[
f(z)=\frac{(A_1,\ldots,A_r,B_1z,\ldots,B_sz;q)_\infty}
{(A_1z,\ldots,A_rz,B_1,\ldots,B_s;q)_\infty}.
\]
The hypothesis permits us to choose $R>\max\{1,|t|\}$ such that
$|A_i|R<1$ for $1\leq i\leq r$.  Hence $f$ is holomorphic in $|z|<R$,
and
\[
f(q^k)=\frac{(A_1,\ldots,A_r;q)_k}
{(B_1,\ldots,B_s;q)_k}.
\]
The result now follows from Theorem \ref{thm:binomial} by expanding the
Jackson integral along its two $q$-lattices.  The remaining admissible
parameter values follow by meromorphic continuation.
\end{proof}

The following explicit form of Theorem \ref{thm:binomial-c} yields the two classical formulas below.

\begin{Cor}\label{cor:binomial-c-hypergeometric}
Suppose that
\[
|A_i|\max\{1,|t|\}<1
\qquad(1\leq i\leq r).
\]
Then
\begin{align}
&\sum_{n=0}^{\infty}\frac{(c;q)_n}{(q;q)_n}t^n
\sum_{k=0}^n
\frac{(q^{-n},cq^n,A_1,\ldots,A_r;q)_k}
{(q,B_1,\ldots,B_s;q)_k}q^k\notag\\
&=\frac{(ct;q)_\infty}{(t;q)_\infty}
\sum_{k=0}^{\infty}
\frac{(\sqrt c,-\sqrt c,\sqrt{cq},-\sqrt{cq},
A_1,\ldots,A_r;q)_k}
{(q,q/t,ct,B_1,\ldots,B_s;q)_k}q^k\notag\\
&\quad+
\frac{(\sqrt c,-\sqrt c,\sqrt{cq},-\sqrt{cq},
A_1,\ldots,A_r,ct^2,B_1t,\ldots,B_st;q)_\infty}
{(1/t,B_1,\ldots,B_s,t\sqrt c,-t\sqrt c,
t\sqrt{cq},-t\sqrt{cq},A_1t,\ldots,A_rt;q)_\infty}\notag\\
&\qquad\cdot
\sum_{k=0}^{\infty}
\frac{(t\sqrt c,-t\sqrt c,t\sqrt{cq},-t\sqrt{cq},
A_1t,\ldots,A_rt;q)_k}
{(q,tq,ct^2,B_1t,\ldots,B_st;q)_k}q^k.
\label{eq:binomial-c-two-series}
\end{align}
\end{Cor}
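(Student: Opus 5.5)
The plan is to specialize Theorem \ref{thm:binomial-c} to
\[
f(z)=\frac{(A_1,\ldots,A_r,B_1z,\ldots,B_sz;q)_\infty}{(A_1z,\ldots,A_rz,B_1,\ldots,B_s;q)_\infty},
\]
exactly as in the proof of Corollary \ref{cor:binomial-hypergeometric}. By hypothesis we can choose $R>\max\{1,|t|\}$ with $|A_i|R<1$, so $f$ is holomorphic in $|z|<R$ and $f(q^k)=(A_1,\ldots,A_r;q)_k/(B_1,\ldots,B_s;q)_k$. The left side of \eqref{eq:binomial-c-main} is then literally the left side of \eqref{eq:binomial-c-two-series}, because $\Delta_n^{(c)}(f)$ is the displayed inner sum with the factor $(cq^n;q)_k$. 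So everything reduces to evaluating the Jackson integral on the right of \eqref{eq:binomial-c-main} by splitting it along its two $q$-lattices.

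\emph{Upper lattice, $s=q^k$.} Write
\[
(q^{k+1}/t;q)_\infty=\frac{(q/t;q)_\infty}{(q/t;q)_k},\qquad
(q^{k+1};q)_\infty=\frac{(q;q)_\infty}{(q;q)_k},\qquad
(ctq^k;q)_\infty=\frac{(ct;q)_\infty}{(ct;q)_k},\qquad
(cq^{2k};q)_\infty=\frac{(c;q)_\infty}{(c;q)_{2k}}.
\]
Then use $(c;q)_{2k}=(\sqrt c,-\sqrt c,\sqrt{cq},-\sqrt{cq};q)_k$. After multiplying by the prefactor $(c;q)_\infty/(q,t,q/t;q)_\infty$, this contribution becomes $(ct;q)_\infty/(t;q)_\infty$ times the first ${}_{r+4}\phi_{s+3}$-type series in \eqref{eq:binomial-c-two-series}.

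\emph{Lower lattice, $s=tq^k$.} Here the summand carries a factor $-t$. Write
\[
(q^{k+1};q)_\infty=\frac{(q;q)_\infty}{(q;q)_k},\qquad
(tq^{k+1};q)_\infty=\frac{(tq;q)_\infty}{(tq;q)_k},\qquad
(ct^2q^k;q)_\infty=\frac{(ct^2;q)_\infty}{(ct^2;q)_k},\qquad
(ct^2q^{2k};q)_\infty=\frac{(ct^2;q)_\infty}{(ct^2;q)_{2k}}.
\]
The $(ct^2;q)_\infty$ factors cancel. Splitting $(ct^2;q)_{2k}$ into the four $\pm t\sqrt c,\pm t\sqrt{cq}$ factorials and dividing by $(ct^2;q)_k$ leaves $(ct^2;q)_k$ in the denominator, as displayed. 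The values $f(tq^k)$ supply the $A_it$ and $B_it$ ratios together with the constant $(A,Bt;q)_\infty/(At,B;q)_\infty$.

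To match the displayed constant, simplify $-t(tq;q)_\infty/(t,q/t;q)_\infty$ using $(t;q)_\infty=(1-t)(tq;q)_\infty$ and $-t(1-1/t)\cdot(q/t;q)_\infty=(1-t)(q/t;q)_\infty\cdot\ldots$, i.e.\ $(1/t;q)_\infty=(1-1/t)(q/t;q)_\infty$. This gives $1/(1/t;q)_\infty$. The factor $(c;q)_\infty$ should be rewritten as $(\sqrt c,-\sqrt c,\sqrt{cq},-\sqrt{cq};q)_\infty$, and $1/(ct^2;q)_\infty$ from the denominator as $1/(\pm t\sqrt c,\pm t\sqrt{cq};q)_\infty$, with one $(ct^2;q)_\infty$ moving to the numerator. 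I expect this bookkeeping of the $(ct^2;q)_\infty$ factors and the sign/$(1/t;q)_\infty$ simplification to be the only real obstacle. I would double-check it by verifying that at $c=q^{-N}$ the second term vanishes, consistent with the proof of Theorem \ref{thm:binomial-c}.

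Finally, the identity is first established for generic denominator parameters, for $|c|R^2<1$, and for $t$ off the lattice $q^{\mathbb Z}$, where both lattice series converge. The remaining admissible parameter values then follow by meromorphic continuation, as in Corollary \ref{cor:binomial-hypergeometric}.
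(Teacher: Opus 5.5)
Your proposal is correct and follows the paper's proof. Like the paper, you specialize Theorem \ref{thm:binomial-c} to the product quotient $f$, split the Jackson integral along its two $q$-lattices, and finish by meromorphic continuation; your explicit bookkeeping also checks out, since $-t(tq;q)_\infty/(t,q/t;q)_\infty=1/(1/t;q)_\infty$ and the displayed factor $(ct^2;q)_\infty/(t\sqrt c,-t\sqrt c,t\sqrt{cq},-t\sqrt{cq};q)_\infty$ is simply $1$.
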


\begin{proof}
For generic values of the denominator parameters, define
\[
f(z)=\frac{(A_1,\ldots,A_r,B_1z,\ldots,B_sz;q)_\infty}
{(A_1z,\ldots,A_rz,B_1,\ldots,B_s;q)_\infty}.
\]
As in the proof of Corollary \ref{cor:binomial-hypergeometric}, the
hypothesis permits us to choose $R>\max\{1,|t|\}$ such that $f$ is
holomorphic in $|z|<R$, and
\[
f(q^k)=\frac{(A_1,\ldots,A_r;q)_k}
{(B_1,\ldots,B_s;q)_k}.
\]
Thus the left-hand side agrees with that of Theorem
\ref{thm:binomial-c}.  Expanding the Jackson integral in
\eqref{eq:binomial-c-main} along its two $q$-lattices gives
\eqref{eq:binomial-c-two-series}.  The remaining admissible parameter
values follow by meromorphic continuation.
\end{proof}

\begin{Rem}[The nonterminating Sears--Carlitz transformation]\label{rem:sears-carlitz}
In Corollary \ref{cor:binomial-c-hypergeometric}, take $(c,t)=(a,x)$ and
\[
A_1=\frac{aq}{bc},
\qquad
(B_1,B_2)=\left(\frac{aq}{b},\frac{aq}{c}\right).
\]
The terminating $q$-Pfaff--Saalsch\"utz summation \cite[Eq.~(II.12)]{gr} gives
\[
\sum_{k=0}^n\frac{(q^{-n},aq^n,aq/bc;q)_k}
{(q,aq/b,aq/c;q)_k}q^k
=\frac{(b,c;q)_n}{(aq/b,aq/c;q)_n}
\left(\frac{aq}{bc}\right)^n.
\]
Thus \eqref{eq:binomial-c-two-series} becomes
\begin{align}
&{}_3\phi_2\!\left[
\begin{matrix}a,b,c\\ aq/b,aq/c\end{matrix};q,\frac{aqx}{bc}\right]\\
&=\frac{(ax;q)_\infty}{(x;q)_\infty}
{}_5\phi_4\!\left[
\begin{matrix}\sqrt a,-\sqrt a,\sqrt{aq},-\sqrt{aq},aq/bc\\
aq/b,aq/c,ax,q/x\end{matrix};q,q\right] \notag\\
&\quad+\frac{(a,aq/bc,aqx/b,aqx/c;q)_\infty}
{(aq/b,aq/c,aqx/bc,1/x;q)_\infty}
{}_5\phi_4\!\left[
\begin{matrix}x\sqrt a,-x\sqrt a,x\sqrt{aq},-x\sqrt{aq},aqx/bc\\
aqx/b,aqx/c,xq,ax^2\end{matrix};q,q\right].
\label{eq:nonterminating-sears-carlitz}
\end{align}
This is the nonterminating Sears--Carlitz transformation of Gasper and Rahman \cite{gasper-rahman1986}; see also \cite[Eq.~(3.4.1)]{gr}.  The identity is initially valid when $|aqx/bc|<1$ and no denominator vanishes, and it extends elsewhere by analytic continuation.
\end{Rem}

\begin{Rem}[Rahman's Askey--Wilson generating function]\label{rem:rahman-aw}
Let $x=\cos\theta$.  The Askey--Wilson polynomials are defined by
\[
p_n(x;a,b,c,d\mid q)
=a^{-n}(ab,ac,ad;q)_n
{}_4\phi_3\!\left[
\begin{matrix}q^{-n},abcdq^{n-1},ae^{i\theta},ae^{-i\theta}\\
ab,ac,ad\end{matrix};q,q\right].
\]
Write
\[
r_n(x;a,b,c,d\mid q)
={}_4\phi_3\!\left[
\begin{matrix}q^{-n},abcdq^{n-1},ae^{i\theta},ae^{-i\theta}\\
ab,ac,ad\end{matrix};q,q\right].
\]
In Corollary \ref{cor:binomial-c-hypergeometric}, replace the parameter
$c$ appearing there by $abcd/q$ and set
\begin{align*}
(A_1,A_2)&=\left(ae^{i\theta},ae^{-i\theta}\right),\\
(B_1,B_2,B_3)&=(ab,ac,ad).
\end{align*}
After substitution and cancellation of common shifted factorials,
\eqref{eq:binomial-c-two-series} gives Rahman's generating function
\begin{align}
&\sum_{n=0}^{\infty}
\frac{(abcd/q;q)_n}{(q;q)_n}t^n r_n(x;a,b,c,d\mid q) \notag\\
&=\frac{(abcdt/q;q)_\infty}{(t;q)_\infty}
{}_6\phi_5\!\left[
\begin{matrix}
\sqrt{abcd/q},-\sqrt{abcd/q},\sqrt{abcd},-\sqrt{abcd},
ae^{i\theta},ae^{-i\theta}\\
ab,ac,ad,abcdt/q,q/t
\end{matrix};q,q\right] \notag\\
&\quad+\frac{(abcd/q,abt,act,adt,ae^{i\theta},ae^{-i\theta};q)_\infty}
{(ab,ac,ad,ate^{i\theta},ate^{-i\theta},1/t;q)_\infty}\notag\\
&\qquad\cdot{}_6\phi_5\!\left[
\begin{matrix}
t\sqrt{abcd/q},-t\sqrt{abcd/q},t\sqrt{abcd},-t\sqrt{abcd},
ate^{i\theta},ate^{-i\theta}\\
abt,act,adt,abcdt^2/q,tq
\end{matrix};q,q\right].
\label{eq:rahman-aw-generating}
\end{align}
This is Rahman's generating function \cite[Eq.~(4.9)]{rahman1996}.  It is
initially valid, for example, when $|a|\max\{1,|t|\}<1$ and no denominator
vanishes; it extends elsewhere by analytic continuation.
\end{Rem}

\section{A double sum of \texorpdfstring{$q$}{q}-Gauss type}\label{sec:qgauss}

The combination of the $q$-Chu--Vandermonde summation with the Andrews--Askey integral has led to several extensions and integral identities; see, for example, Fang \cite{fang} and Wang \cite{wang-aa,wang-pfaff}.  Here, the additional feature is that the terminating transformation is formulated for an arbitrary holomorphic function $f$, and the regularized integral of Section \ref{sec:regularity} then supplies the continuation step.

We first establish a terminating identity.

\begin{Lem}\label{lem:qgauss-finite}
Let $N$ be a nonnegative integer.  Then
\begin{align}
&\sum_{n=0}^N
\frac{(a,q^{-N};q)_n}{(c,q;q)_n}
\left(\frac{cq^N}{a}\right)^n\Delta_n(f)=\frac{(c/a;q)_N}{(c;q)_N}
\sum_{k=0}^N
\frac{(a,q^{-N};q)_k}{(q,aq^{1-N}/c;q)_k}f(q^k)q^k.
\label{eq:qgauss-finite}
\end{align}
\end{Lem}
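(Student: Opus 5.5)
Following the proof of Lemma \ref{lem:binomial-finite}, we interchange the two finite sums, so that everything reduces to the coefficient of $f(q^k)q^k$ on the left-hand side:
\[
\frac{1}{(q;q)_k}\sum_{n=k}^N
\frac{(a,q^{-N};q)_n(q^{-n};q)_k}{(c,q;q)_n}\left(\frac{cq^N}{a}\right)^n .
\]
We rewrite $(q^{-n};q)_k/(q;q)_n=(-1)^kq^{-nk+\binom k2}/(q;q)_{n-k}$ and put $n=k+m$. We then split off the $k$-dependent pieces using
\[
(a;q)_{k+m}=(a;q)_k(aq^k;q)_m,\qquad
(q^{-N};q)_{k+m}=(q^{-N};q)_k(q^{k-N};q)_m,\qquad
(c;q)_{k+m}=(c;q)_k(cq^k;q)_m .
\]
The powers of $q$ combine as $q^{-(k+m)k}(cq^N/a)^{k+m}$. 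The $m$-sum should become
\[
\sum_{m=0}^{N-k}\frac{(aq^k,q^{k-N};q)_m}{(q,cq^k;q)_m}\left(\frac{cq^{N-k}}{a}\right)^m .
\]
This is a terminating ${}_2\phi_1$ with argument $(cq^k)q^{N-k}/(aq^k)$, which is exactly the form summed by the $q$-Chu--Vandermonde formula \cite[Eq.~(II.7)]{gr}. So it equals
\[
\frac{(cq^k/(aq^k);q)_{N-k}}{(cq^k;q)_{N-k}}=\frac{(c/a;q)_{N-k}}{(cq^k;q)_{N-k}}.
\]

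Collecting factors, the coefficient of $f(q^k)$ is
\[
(-1)^kq^{\binom k2-k^2}\left(\frac{cq^N}{a}\right)^k
\frac{(a,q^{-N};q)_k}{(q;q)_k(c;q)_k}\cdot\frac{(c/a;q)_{N-k}}{(cq^k;q)_{N-k}} .
\]
Next we use $(c;q)_k(cq^k;q)_{N-k}=(c;q)_N$ and the reversal
\[
(c/a;q)_{N-k}=\frac{(c/a;q)_N}{(c/a;q)_N/(c/a;q)_{N-k}}
=\frac{(c/a;q)_N}{(-c/a)^kq^{\binom k2+(N-k)k}(aq^{1-N}/c;q)_k}.
\]
After these substitutions the signs $(-1)^k$ and the powers of $c/a$ should cancel. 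The powers of $q$ should reduce to $q^k$, since
\[
\binom k2-k^2+Nk-\binom k2-Nk+k^2 = 0 .
\]
The extra $q^k$ is the one carried along explicitly from $\Delta_n$, so it remains. The result should be
\[
\frac{(c/a;q)_N}{(c;q)_N}\,\frac{(a,q^{-N};q)_k}{(q,aq^{1-N}/c;q)_k},
\]
which is the right-hand side of \eqref{eq:qgauss-finite}.

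The only delicate point is the exponent bookkeeping: the $q^{-nk}$ coming from $(q^{-n};q)_k$, the factor $q^{Nk}$, and the reversal exponent must all combine to exactly $q^0$ beyond the explicit $q^k$. I will check this on $N=1$ directly. No analytic issues arise, because all sums are finite.
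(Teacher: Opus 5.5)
Your proof is correct and follows essentially the same route as the paper: interchange the finite sums, rewrite $(q^{-n};q)_k$, shift $n=k+m$, and sum the resulting terminating ${}_2\phi_1$ by the $q$-Chu--Vandermonde formula (II.7). Your bookkeeping checks out, including the reversal identity for $(c/a;q)_{N-k}$ and the vanishing total exponent of $q$; the only cosmetic slip is that the displayed ``coefficient of $f(q^k)$'' is really the coefficient of $f(q^k)q^k$, which you implicitly acknowledge afterwards.
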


\begin{proof}
After interchanging the two finite sums, we need only evaluate
\[
\sum_{n=k}^N
\frac{(a,q^{-N};q)_n(q^{-n};q)_k}{(c,q;q)_n}
\left(\frac{cq^N}{a}\right)^n.
\]
Using
\[
(q^{-n};q)_k=(-1)^kq^{\binom k2-nk}
\frac{(q;q)_n}{(q;q)_{n-k}}
\]
and then replacing $n$ by $n+k$, we obtain a terminating ${}_2\phi_1$.
The $q$-Chu--Vandermonde summation \cite[Eq.~(II.7)]{gr} gives
\begin{align*}
&\sum_{n=k}^N
\frac{(a,q^{-N};q)_n(q^{-n};q)_k}{(c,q;q)_n}
\left(\frac{cq^N}{a}\right)^n=\frac{(c/a;q)_N(a,q^{-N};q)_k}
{(c;q)_N(aq^{1-N}/c;q)_k}.
\end{align*}
Substitution proves \eqref{eq:qgauss-finite}.
\end{proof}

We now extend the terminating parameter $q^{-N}$ to an arbitrary parameter.

\begin{Thm}\label{thm:qgauss}
Let $f$ be holomorphic in $|z|<R$, where $R>1$, and assume
\[
\left|\frac{c}{ab}\right|<R.
\]
Then
\begin{align}
&\sum_{n=0}^{\infty}
\frac{(a,b;q)_n}{(c,q;q)_n}
\left(\frac{c}{ab}\right)^n
\sum_{k=0}^n\frac{(q^{-n};q)_k}{(q;q)_k}f(q^k)q^k \notag\\
&\quad=
\frac{(a,b,c/a,c/b;q)_\infty}
{(q,abq/c,c/ab,c;q)_\infty}
\int_{c/ab}^1
\frac{(tq,abtq/c;q)_\infty}{(at,bt;q)_\infty}f(t)\,d_qt.
\label{eq:qgauss-main}
\end{align}
For fixed $f$, both sides are meromorphic in $a,b,c$ on the region
$|c/ab|<R$, away from their poles. 
\end{Thm}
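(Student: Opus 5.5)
The plan is to run the coefficientwise continuation argument of Theorem \ref{thm:binomial}, this time in the parameter $b$. Put $x=c/ab$ and regard $a$, $b$, $x$ as the independent variables, so that $c=abx$, $c/a=bx$ and $c/b=ax$. I begin under the restriction $|a|R<1$ and set
\[
g(z)=\frac{(a;q)_\infty}{(az;q)_\infty}f(z),
\]
which is holomorphic in $|z|<R$ and does not depend on $b$. The integrand in \eqref{eq:qgauss-main} is $(tq,tq/x;q)_\infty g(t)/\bigl((a;q)_\infty(bt;q)_\infty\bigr)$. This is exactly the kernel of $\mathcal J_{bx,g}(x)$, since the regularized integral has parameter $a'$ with $a'/x=b$, i.e.\ $a'=bx=c/a$. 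Cancelling infinite products, the right-hand side of \eqref{eq:qgauss-main} becomes
\[
\frac{(bx,ax;q)_\infty}{(q,x,abx;q)_\infty}\,\mathcal J_{bx,g}(x).
\]
By Theorem \ref{thm:regularity} this equals
\[
\frac{(ax;q)_\infty}{(abx;q)_\infty}\sum_{k\ge0}x^k\sum_{n\ge0}\frac{(q^{k+1},bx;q)_n}{(q;q)_n}g_{n+k},
\]
which is holomorphic in $x$ near $0$, locally uniformly in $b$.

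Next I check that the coefficient of $x^m$ on each side is a polynomial in $b$ of degree at most $m$.

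\emph{Right-hand side.} The $q$-binomial theorem gives
\[
\frac{(ax;q)_\infty}{(abx;q)_\infty}=\sum_j\frac{(b;q)_j}{(q;q)_j}(ax)^j,
\]
and the coefficient of $x^j$ here has degree $j$ in $b$. In the double series, $(bx;q)_n$ contributes degree $\ell$ in $b$ to the coefficient of $x^\ell$, while the coefficients $g_r$ do not involve $b$. Hence the coefficient of $x^m$ has degree at most $m$ in $b$. Convergence of the resulting expressions follows from Cauchy's estimate exactly as in \eqref{eq:binomial-coefficient}.

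\emph{Left-hand side.} The summand is
\[
\frac{(a,b;q)_n}{(q;q)_n(abx;q)_n}\,x^n\Delta_n(f).
\]
Expanding $1/(abx;q)_n$ in powers of $x$, a term $x^{n+j}$ carries a factor of degree at most $n+j$ in $b$. Holomorphy of the left side in $x$ near $0$, locally uniformly in $b$, comes from Lemma \ref{lem:operators}: $|\Delta_n(f)|\le C_\rho\rho^{-n}$, and $(a,b;q)_n/(q,abx;q)_n$ is bounded for $|x|$ small and $b$ in a compact set.

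At $b=q^{-N}$ (with $a$ and $x$ fixed, $x$ small and generic), the left-hand side equals the right-hand side of Lemma \ref{lem:qgauss-finite} with $c=aq^{-N}x$. For the right-hand side, $bx=xq^{-N}$ means $x=(bx)q^N$, so \eqref{eq:J-terminating} applies with parameter $xq^{-N}$. That gives
\[
\frac{(xq^{-N},ax;q)_\infty}{(x,axq^{-N};q)_\infty}\sum_{k=0}^N\frac{(q^{-N};q)_k}{(q,q^{1-N}/x;q)_k}g(q^k)q^k.
\]
The remaining step is to rewrite this. Use $g(q^k)=(a;q)_kf(q^k)$, together with $q^{1-N}/x=aq^{1-N}/c$, the relation $(xq^{-N};q)_\infty/(x;q)_\infty=(xq^{-N};q)_N$, and the analogous relation for $(ax;q)_\infty/(axq^{-N};q)_\infty$. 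The target is to match the prefactor $(c/a;q)_N/(c;q)_N=(xq^{-N};q)_N/(axq^{-N};q)_N$ of \eqref{eq:qgauss-finite}; this is routine bookkeeping.

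Hence the $x^m$-coefficients agree at the infinitely many points $b=q^{-N}$, so they coincide as polynomials in $b$. The two sides therefore agree near $x=0$ for all $b$ and all $a$ with $|a|R<1$. Meromorphic continuation in $a$, then in $b$, then analytic continuation in $x=c/ab$ throughout $|x|<R$ completes the argument.

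The step I expect to be the main obstacle is the analytic one: establishing joint holomorphy near $x=0$ of both sides as functions of $(b,x)$, and then continuing in $a$ beyond $|a|R<1$. If continuing in $a$ via $g$ is awkward, I would use the symmetry in $a\leftrightarrow b$ of \eqref{eq:qgauss-main} together with Theorem \ref{thm:regularity}'s statement that its series gives the meromorphic continuation in the parameter $a'$ with poles only at $a'\in\{q^{-j}\}$.
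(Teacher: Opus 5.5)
Your argument is correct, but it follows a different route from the paper's proof of this theorem. It transplants the coefficientwise polynomial argument used for Theorem \ref{thm:binomial}.

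\emph{The paper's route.} The paper keeps $a$ and $c$ fixed and sets $b=1/t$.
\begin{itemize}
\item The left side becomes $L_1(t)$. Since $t^n(1/t;q)_n=\prod_{j<n}(t-q^j)$, Lemma \ref{lem:operators} shows $L_1$ is holomorphic at $t=0$.
\item The right side becomes $R_1(t)$, built from $\mathcal J_{c/a,h}(ct/a)$ with $h(s)=(a;q)_\infty f(s)/(as;q)_\infty$. The regularizing parameter $c/a$ is \emph{fixed}, so Theorem \ref{thm:regularity} gives holomorphy at $t=0$ directly.
\item The terminating points $t=q^N$ accumulate at $0$, so the one-variable identity theorem finishes the proof, with no degree counting.
\end{itemize}

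\emph{Your route.} You hold $x=c/ab$ fixed instead, so the regularizing parameter $bx$ moves with $b$. The cost is the bound $\deg_b\le m$ on the $x^m$-coefficients of both sides, and your degree counts are right. The two mechanisms are closely related: with $c$ fixed one has $x=ct/a$, and $\deg_b P_m\le m$ is what lets $\sum_m P_m(b)x^m$ be read as a power series in $t=1/b$.

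\emph{Comparison.} The paper's route is shorter. Yours gives agreement for every $b$ at once when $|c/ab|$ is small. The joint holomorphy you flag as the main obstacle is not needed beyond what you already have. Agreement on an open set of $(a,b,x)$, together with meromorphy of both sides on the connected region $\{ab\ne0,\ |c/ab|<R\}$, suffices for the final continuation.

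\emph{Two slips to correct.} Neither damages the argument.
\begin{itemize}
\item The $q$-binomial theorem gives $\sum_j\frac{(b;q)_j}{(q;q)_j}(ax)^j=\frac{(abx;q)_\infty}{(ax;q)_\infty}$, which is the reciprocal of the factor you need. The correct expansion is $\frac{(ax;q)_\infty}{(abx;q)_\infty}=\sum_{j\ge0}\frac{(1/b;q)_j}{(q;q)_j}(abx)^j$. Its $x^j$-coefficient $a^j\prod_{i=0}^{j-1}(b-q^i)/(q;q)_j$ still has degree $j$ in $b$.
\item In the terminating step, \eqref{eq:J-terminating} with parameter $xq^{-N}$ gives the denominator $(q,q^{1-N}/(xq^{-N});q)_k=(q,q/x;q)_k$, not $(q,q^{1-N}/x;q)_k$. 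At $c=aq^{-N}x$ one has $aq^{1-N}/c=q/x$, so the identity $q^{1-N}/x=aq^{1-N}/c$ you invoke is false as written. With both corrected, the sums coincide (using $g(q^k)=(a;q)_kf(q^k)$), and the prefactors agree exactly as you say.
\end{itemize}
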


\begin{proof}
Replace $q^{-N}$ by $1/t$.  The left-hand side of Lemma
\ref{lem:qgauss-finite} is then the specialization at $t=q^N$ of
\begin{equation}\label{eq:L1t}
L_1(t)\coloneqq \sum_{n=0}^{\infty}
\frac{(a,1/t;q)_n}{(c,q;q)_n}
\left(\frac{ct}{a}\right)^n\Delta_n(f).
\end{equation}
Since
\[
t^n(1/t;q)_n=\prod_{j=0}^{n-1}(t-q^j),
\]
Lemma \ref{lem:operators} shows that \eqref{eq:L1t} is holomorphic in a neighborhood of $t=0$.

On the other hand, the right-hand side of \eqref{eq:qgauss-finite} is the specialization at $t=q^N$ of
\begin{align}
R_1(t)
&\coloneqq \frac{(c/a,ct;q)_\infty}{(ct/a,c;q)_\infty}
\frac{(a,1/t;q)_\infty}{(q,aq/ct;q)_\infty}
\int_{ct/a}^1
\frac{(sq,asq/ct;q)_\infty}{(as,s/t;q)_\infty}f(s)\,d_qs.
\label{eq:R1t}
\end{align}
To see this, set
\[
h(s)=\frac{(a;q)_\infty}{(as;q)_\infty}f(s).
\]
Then the product of the second prefactor and the integral in
\eqref{eq:R1t} is
$(q;q)_\infty^{-1}\mathcal J_{c/a,h}(ct/a)$.  Since
$h(q^k)=(a;q)_k f(q^k)$, \eqref{eq:J-terminating}, with regularizing
parameter $c/a$, gives, for all sufficiently large $N$, the following value
at $t=q^N$:
\[
\sum_{k=0}^N
\frac{(a,q^{-N};q)_k}{(q,aq^{1-N}/c;q)_k}f(q^k)q^k.
\]
Moreover,
\[
\frac{(c/a,ct;q)_\infty}{(ct/a,c;q)_\infty}
\bigg|_{t=q^N}
=\frac{(c/a;q)_N}{(c;q)_N}.
\]
Theorem \ref{thm:regularity} also shows that $R_1(t)$ is holomorphic near
$t=0$.  Hence $L_1(q^N)=R_1(q^N)$ for all sufficiently large $N$.  Since
$q^N\to0$, the identity theorem yields $L_1(t)=R_1(t)$ near zero.  Analytic
continuation in $t$ throughout the region $|ct/a|<R$, away from the poles of
$L_1$ and $R_1$,
followed by the substitution $t=1/b$, proves \eqref{eq:qgauss-main} in the
initial parameter domain.  Meromorphic continuation in the remaining
parameters then proves the stated result.  Lemma \ref{lem:operators} also
shows that the outer series converges absolutely, with the terminating inner
sum evaluated first, under $|c/ab|<R$.
\end{proof}

The following basic hypergeometric specialization follows directly from
\eqref{eq:qgauss-main}.

\begin{Cor}\label{cor:qgauss-hypergeometric}
Suppose that
\[
|A_i|\max\left\{1,\left|\frac{c}{ab}\right|\right\}<1
\qquad(1\leq i\leq r).
\]
Then
\begin{align*}
&\sum_{n=0}^{\infty}
\frac{(a,b;q)_n}{(c,q;q)_n}
\left(\frac{c}{ab}\right)^n
{}_{r+1}\phi_r\!\left[
\begin{matrix}q^{-n},A_1,\ldots,A_r\\ B_1,\ldots,B_r\end{matrix};q,q
\right]\\
&\quad=
\frac{(a,b,c/a,c/b,A_1,\ldots,A_r;q)_\infty}
{(q,abq/c,c/ab,c,B_1,\ldots,B_r;q)_\infty}\int_{c/ab}^1
\frac{(tq,abtq/c,B_1t,\ldots,B_rt;q)_\infty}
{(at,bt,A_1t,\ldots,A_rt;q)_\infty}\,d_qt.
\end{align*}
\end{Cor}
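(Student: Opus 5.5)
Corollary \ref{cor:qgauss-hypergeometric} will follow by applying Theorem \ref{thm:qgauss} to the same product function used in Corollaries \ref{cor:binomial-hypergeometric} and \ref{cor:binomial-c-hypergeometric}. For generic denominator parameters, set
\[
f(z)=\frac{(A_1,\ldots,A_r,B_1z,\ldots,B_rz;q)_\infty}
{(A_1z,\ldots,A_rz,B_1,\ldots,B_r;q)_\infty}.
\]
The hypothesis $|A_i|\max\{1,|c/ab|\}<1$ lets us choose $R>\max\{1,|c/ab|\}$ with $|A_i|R<1$ for every $i$. With this choice, $f$ is holomorphic in $|z|<R$, since its only possible poles lie at $z=q^{-j}/A_i$. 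Moreover, the condition $|c/ab|<R$ of Theorem \ref{thm:qgauss} holds.

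Next we identify the left-hand side. Since $f(q^k)=(A_1,\ldots,A_r;q)_k/(B_1,\ldots,B_r;q)_k$, we get
\[
\Delta_n(f)=\sum_{k=0}^n\frac{(q^{-n},A_1,\ldots,A_r;q)_k}{(q,B_1,\ldots,B_r;q)_k}q^k.
\]
This is exactly the terminating ${}_{r+1}\phi_r$ with argument $q$; the factor $\bigl((-1)^kq^{\binom k2}\bigr)^{1+s-r}$ equals $1$ because $s=r$ here (the $r+1$ numerator parameters are matched by $r$ denominator parameters). Hence the left-hand side of \eqref{eq:qgauss-main} coincides with the left-hand side of the corollary.

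On the right-hand side, substitute $f(t)$ into the Jackson integral of \eqref{eq:qgauss-main}. The constant factor $(A_1,\ldots,A_r;q)_\infty/(B_1,\ldots,B_r;q)_\infty$ comes out of the integral and joins the prefactor. The factors $(B_it;q)_\infty/(A_it;q)_\infty$ stay inside the integrand. This produces exactly the stated formula.

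Finally, the restriction to generic $B_i$, which avoids $(B_i;q)_\infty=0$, is removed by meromorphic continuation in the $B_i$. Both sides are meromorphic in these parameters: the left side through the finite sums, and the right side through the explicit products and the lattice expansion of the integral.

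There is no substantive obstacle. The only point to check is that a single $R$ works both for the holomorphy of $f$ and for the assumption of Theorem \ref{thm:qgauss}, and the hypothesis on the $A_i$ is tailored precisely to guarantee this.
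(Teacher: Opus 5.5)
Your proposal is correct and matches the paper's proof essentially step for step. It uses the same product function $f$ and the same choice of $R>\max\{1,|c/ab|\}$ with $|A_i|R<1$, identifies $\Delta_n(f)$ with the terminating ${}_{r+1}\phi_r$, applies Theorem \ref{thm:qgauss} directly, and finishes by meromorphic continuation in the remaining parameters. One small slip: the exponent $1+s-r$ in the definition vanishes because a ${}_{r+1}\phi_r$ has one more numerator parameter than denominator parameter, so it equals $1+r-(r+1)=0$; the reason is not that ``$s=r$''.
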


\begin{proof}
For generic values of the denominator parameters, define
\[
f(z)=\frac{(A_1,\ldots,A_r,B_1z,\ldots,B_rz;q)_\infty}
{(A_1z,\ldots,A_rz,B_1,\ldots,B_r;q)_\infty}.
\]
The hypothesis permits us to choose
$R>\max\{1,|c/ab|\}$ such that $|A_i|R<1$ for $1\leq i\leq r$.
Hence $f$ is holomorphic in $|z|<R$, and
\[
f(q^k)=\frac{(A_1,\ldots,A_r;q)_k}{(B_1,\ldots,B_r;q)_k}.
\]
The result now follows from Theorem \ref{thm:qgauss}.  The remaining
admissible parameter values follow by meromorphic continuation.
\end{proof}

\section{A double sum of Rogers' \texorpdfstring{${}_6\phi_5$}{6phi5} type}\label{sec:rogers}

We next apply the same continuation procedure to Rogers' ${}_6\phi_5$
summation.  Nonterminating extensions of this summation and related $q$-series
expansion formulas have been studied by Liu \cite{liu6phi5,liu-expansion}.
For later use, we first derive a finite consequence of the strong Bailey lemma
in a normalization adapted to $\Delta_n^{(a)}$, and then use the regularized
Jackson integral to continue the resulting identity.

\begin{Lem}[A finite consequence of the strong Bailey lemma]\label{lem:rogers-finite}
Let $N$ be a nonnegative integer.  Then
\begin{align}
&\sum_{n=0}^N
\frac{(1-aq^{2n})(a,b,c,q^{-N};q)_n}
{(q,aq/b,aq/c,aq^{N+1};q)_n}
\left(\frac{aq^{N+1}}{bc}\right)^n
\Delta_n^{(a)}(f) \notag\\
&\quad=\frac{(a;q)_{N+1}(aq/bc;q)_N}
{(aq/b,aq/c;q)_N}
\sum_{k=0}^N
\frac{(b,c,q^{-N};q)_k}{(bcq^{-N}/a,q;q)_k}f(q^k)q^k.
\label{eq:rogers-finite}
\end{align}
\end{Lem}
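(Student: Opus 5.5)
Following the pattern of Lemmas \ref{lem:binomial-finite} and \ref{lem:qgauss-finite}, we interchange the two finite sums and compute, for each fixed $k$ with $0\le k\le N$, the coefficient of $f(q^k)q^k/(q;q)_k$ on the left. This coefficient is
\[
S_k\coloneqq\sum_{n=k}^N
\frac{(1-aq^{2n})(a,b,c,q^{-N};q)_n}
{(q,aq/b,aq/c,aq^{N+1};q)_n}
\left(\frac{aq^{N+1}}{bc}\right)^n(q^{-n},aq^n;q)_k .
\]
The goal is to show that
\[
S_k=\frac{(a;q)_{N+1}(aq/bc;q)_N}{(aq/b,aq/c;q)_N}\,
\frac{(b,c,q^{-N};q)_k}{(bcq^{-N}/a;q)_k}.
\]

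\textbf{Reindexing.} We use
\[
(q^{-n};q)_k=(-1)^kq^{\binom k2-nk}\frac{(q;q)_n}{(q;q)_{n-k}},
\qquad (a;q)_n(aq^n;q)_k=(a;q)_{n+k},
\]
and put $n=k+m$. Then $(x;q)_{k+m}=(x;q)_k(xq^k;q)_m$ for every parameter, and
\[
1-aq^{2n}=1-(aq^{2k})q^{2m}.
\]
Hence $S_k$ becomes an explicit $k$-dependent prefactor times a terminating very-well-poised ${}_6\phi_5$ in the variable $m$. Its parameters are $A=aq^{2k}$, $bq^k$, $cq^k$, and $q^{-(N-k)}$, together with the factor $(aq^{k};q)_m$ coming from $(a;q)_{2k+m}$; the well-poised partner is $(q;q)_m$ after the $(q;q)_n/(q;q)_{n-k}$ cancellation. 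The argument is $aq^{N+1}/bc$ times the power $q^{-km}$ from $(q^{-n};q)_k$, i.e.\ $Aq^{N-k+1}/(bq^k\cdot cq^k)$. This is exactly the argument required by Rogers' terminating ${}_6\phi_5$ summation \cite[Eq.~(II.21)]{gr} with parameters $(A;bq^k,cq^k,q^{-(N-k)})$. The denominators $aq/b$, $aq/c$, $aq^{N+1}$ shift to $Aq/(bq^k)$, $Aq/(cq^k)$, $Aq^{N-k+1}$, as well-poisedness requires.

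\textbf{Summation.} Applying (II.21) gives
\[
\frac{(Aq,\,Aq/(bc q^{2k});q)_{N-k}}{(Aq/(bq^k),\,Aq/(cq^k);q)_{N-k}}
=\frac{(aq^{2k+1},\,aq/bc;q)_{N-k}}{(aq^{k+1}/b,\,aq^{k+1}/c;q)_{N-k}} .
\]

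\textbf{Simplification.} It remains to multiply this by the prefactor and simplify. The products $(aq/b;q)_k(aq^{k+1}/b;q)_{N-k}=(aq/b;q)_N$, and likewise for $c$, produce the denominator $(aq/b,aq/c;q)_N$. The factor $(aq/bc;q)_{N-k}$ must be converted into $(aq/bc;q)_N/(bcq^{-N}/a;q)_k$ times powers of $q$ and $a/bc$, via the standard reversal $(x;q)_{N-k}=(x;q)_N/\bigl((-1)^k x^k q^{N k-\binom{k+1}2}\,(q^{1-N}/x;q)_k\bigr)$ with $x=aq/bc$. Next, $(a;q)_{2k}(aq^{2k+1};q)_{N-k}/(aq^{N+1};q)_k$, together with the $(1-aq^{2k})$ absorbed by the ${}_6\phi_5$ normalization, should collapse to $(a;q)_{N+1}$. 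Finally, $(q^{-N};q)_{k}$ arises from $(q^{-N};q)_{n}=(q^{-N};q)_k(q^{k-N};q)_m$.

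The main obstacle is purely bookkeeping: checking that all stray powers of $q$, $(-1)^k$, and $(aq^{N+1}/bc)^k$ cancel exactly against the reversal factor, leaving the clean $q^k$ on the right. I would verify this by tracking exponents of $q$ as quadratic polynomials in $k$, and sanity-check the result at $k=0$ (which is Rogers' sum itself) and at $N=k$.
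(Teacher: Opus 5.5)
Your proposal is correct, but it reaches the lemma by a different route from the paper. The paper does no termwise summation. It takes $\beta_k=f(q^k)/(q;q)_k$ and obtains $\alpha_n$, and with it $\Delta_n^{(a)}(f)$, from the inverse Bailey relation. It then quotes Andrews' conjugate pair $(\gamma_n,\delta_n)$ with $\rho_1=b$, $\rho_2=c$, and substitutes into the finite strong Bailey identity $\sum\beta_n\delta_n=\sum\alpha_n\gamma_n$. You instead interchange the sums and evaluate each coefficient $S_k$ by Rogers' terminating ${}_6\phi_5$ with parameters $(aq^{2k};bq^k,cq^k,q^{k-N})$.

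There is one slip to fix: $(a;q)_{2k+m}=(a;q)_{2k}(aq^{2k};q)_m$, so the factor is $(A;q)_m$, not $(aq^{k};q)_m$. With that correction, the bookkeeping you left open does close:
\begin{itemize}
\item $(1-aq^{2k})(a;q)_{2k}(aq^{2k+1};q)_{N-k}/(aq^{N+1};q)_k=(a;q)_{N+1}$;
\item the factor $(-1)^k$ cancels against the reversal of $(aq/bc;q)_{N-k}$;
\item $(aq^{N+1}/bc)^k/(aq/bc)^k=q^{Nk}$ cancels the $q^{-Nk}$ from that reversal;
\item the remaining exponent $\binom k2-k^2+\binom{k+1}2$ is zero.
\end{itemize}
This gives exactly the required $S_k$.

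The two proofs are the same identity sliced differently. After Bailey inversion, $\sum\beta_n\delta_n=\sum\alpha_n\gamma_n$ for arbitrary $\beta$ is equivalent, coefficient by coefficient in $\beta_k$, to your evaluation of $S_k$. In the Bailey formulation the work is split in two. One part is the inversion producing $\Delta_n^{(a)}$. The other is the relation $\gamma_n=\sum_{r\ge n}\delta_r/\bigl((q;q)_{r-n}(aq;q)_{r+n}\bigr)$, which for the quoted pair is a balanced ${}_3\phi_2$ summed by $q$-Pfaff--Saalsch\"utz. Your single use of Rogers' sum does both at once.

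Your version is self-contained and follows the template of Lemmas \ref{lem:binomial-finite} and \ref{lem:qgauss-finite}. It also shows directly why this section is of Rogers' ${}_6\phi_5$ type. The paper's version is shorter given Andrews' framework, and it makes the link to Andrews' ${}_5\phi_4$--${}_4\phi_3$ expansion in Remark \ref{rem:andrews-terminating} immediate.
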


\begin{proof}
We apply the strong Bailey lemma in the finite form used by Andrews
\cite[Section~3]{andrews2012}.  Put $\rho_1=b$, $\rho_2=c$, and
\[
\beta_k=\frac{f(q^k)}{(q;q)_k}.
\]
The inverse Bailey relation \cite[Eq.~(3.6)]{andrews2012} gives
\[
\alpha_n=
\frac{(a;q)_n(1-aq^{2n})(-1)^nq^{\binom n2}}
{(q;q)_n(1-a)}\Delta_n^{(a)}(f).
\]
For these parameters, the conjugate sequences in
\cite[Eqs.~(3.3) and (3.4)]{andrews2012} are
\begin{align*}
\gamma_n
&=\frac{(aq/b,aq/c;q)_N}{(aq,aq/bc;q)_N}
\frac{(b,c,q^{-N};q)_n}{(aq/b,aq/c,aq^{N+1};q)_n}
\left(-\frac{aq}{bc}\right)^n
q^{nN-\binom n2},\\
\delta_n
&=\frac{(b,c,q^{-N};q)_n}{(bcq^{-N}/a;q)_n}q^n.
\end{align*}
Substitution into the finite strong Bailey identity
$\sum_{n=0}^N\beta_n\delta_n=
\sum_{n=0}^N\alpha_n\gamma_n$ yields
\begin{align*}
&\sum_{k=0}^N
\frac{(b,c,q^{-N};q)_k}{(bcq^{-N}/a,q;q)_k}f(q^k)q^k\\
&\quad=\frac{(aq/b,aq/c;q)_N}
{(1-a)(aq,aq/bc;q)_N}
\sum_{n=0}^N
\frac{(1-aq^{2n})(a,b,c,q^{-N};q)_n}
{(q,aq/b,aq/c,aq^{N+1};q)_n}
\left(\frac{aq^{N+1}}{bc}\right)^n
\Delta_n^{(a)}(f).
\end{align*}
Rearranging and using $(1-a)(aq;q)_N=(a;q)_{N+1}$ proves
\eqref{eq:rogers-finite}.  The remaining admissible parameter values follow
by meromorphic continuation.
\end{proof}

\begin{Rem}\label{rem:andrews-terminating}
If
\[
f(q^k)=\frac{(D,E;q)_k}{(F,G,H;q)_k},
\qquad FGH=aDEq,
\]
then solving \eqref{eq:rogers-finite} for the finite sum on the right gives
the terminating balanced ${}_5\phi_4$-to-${}_4\phi_3$ expansion of Andrews
\cite[Theorem~4]{andrews2012}.  This is precisely the specialization that
Andrews derives from the same strong Bailey lemma.  Thus the form involving an
arbitrary function is a direct consequence of the strong Bailey lemma; its role here is to supply
the terminating input for the continuation argument below.
\end{Rem}

\begin{Thm}\label{thm:rogers}
Let $f$ be holomorphic in $|z|<R$, where $R>1$, and assume
\[
\left|\frac{aq}{bcd}\right|<R.
\]
Then
\begin{align}
&\sum_{n=0}^{\infty}
\frac{(1-aq^{2n})(a,b,c,d;q)_n}
{(q,aq/b,aq/c,aq/d;q)_n}
\left(\frac{aq}{bcd}\right)^n
\sum_{k=0}^n
\frac{(q^{-n},aq^n;q)_k}{(q;q)_k}f(q^k)q^k \notag\\
&\quad=
\frac{(a,b,c,d,aq/bc,aq/bd,aq/cd;q)_\infty}
{(aq/b,aq/c,aq/d,aq/bcd,bcd/a,q;q)_\infty}\int_{aq/bcd}^1
\frac{(bcdt/a,tq;q)_\infty}{(bt,ct,dt;q)_\infty}f(t)\,d_qt.
\label{eq:rogers-main}
\end{align}
For fixed $f$, both sides are meromorphic in $a,b,c,d$ on the region
$|aq/bcd|<R$, away from their poles.  
\end{Thm}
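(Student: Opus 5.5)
The plan is to run the same continuation argument as in the proof of Theorem~\ref{thm:qgauss}, now starting from Lemma~\ref{lem:rogers-finite}. The parameter $d$ plays the role of $q^{-N}$, so I replace $q^{-N}$ by $1/t$; at the end I substitute $t=1/d$. I work first in an initial parameter domain: $|b|R<1$, $|c|R<1$, $(a,aq/bc;q)_\infty\neq0$, and generic values elsewhere so that no denominator vanishes. I then continue meromorphically.

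\emph{Series side.} Define
\[
L(t)\coloneqq\sum_{n=0}^{\infty}
\frac{(1-aq^{2n})(a,b,c,1/t;q)_n}{(q,aq/b,aq/c,aqt;q)_n}
\left(\frac{aqt}{bc}\right)^n\Delta_n^{(a)}(f).
\]
At $t=q^N$ this is the left-hand side of \eqref{eq:rogers-finite}, because the factor $(q^{-N};q)_n$ truncates the sum at $n=N$. Since $t^n(1/t;q)_n=\prod_{j<n}(t-q^j)$, every term is holomorphic near $t=0$. The factors $(1-aq^{2n})$, $(a,b,c;q)_n$ and $1/(q,aq/b,aq/c,aqt;q)_n$ are bounded uniformly in $n$ for $t$ near $0$. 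Lemma~\ref{lem:operators} gives $|\Delta_n^{(a)}(f)|\le C\rho^{-n}$, so $L$ is holomorphic on a neighbourhood of $t=0$.

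\emph{Integral side.} Let $R(t)$ be the right-hand side of \eqref{eq:rogers-main} with $d=1/t$. Set $\alpha=aq/bc$, $x=aqt/bc=\alpha t$, and
\[
h(s)=\frac{(b,c;q)_\infty}{(bs,cs;q)_\infty}f(s),
\]
which is holomorphic in $|s|<R$ under the initial restrictions. With these choices one checks the following.
\begin{itemize}
\item $(\alpha/x;q)_\infty/(q/x;q)_\infty=(1/t;q)_\infty/(bc/at;q)_\infty$.
\item $(sq/x,sq;q)_\infty/(\alpha s/x;q)_\infty=(bcs/at,sq;q)_\infty/(s/t;q)_\infty$.
\end{itemize}
Hence
\[
R(t)=\frac{(a,aq/bc,aqt/b,aqt/c;q)_\infty}{(aq/b,aq/c,aqt,aqt/bc;q)_\infty}\cdot\frac{\mathcal J_{\alpha,h}(\alpha t)}{(q;q)_\infty}.
\]
By Theorem~\ref{thm:regularity}, $R$ is holomorphic near $t=0$. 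At $t=q^N$ we have $x=\alpha q^N$ and $h(q^k)=(b,c;q)_kf(q^k)$. Therefore \eqref{eq:J-terminating} yields
\[
\sum_{k=0}^N\frac{(b,c,q^{-N};q)_k}{(q,bcq^{-N}/a;q)_k}f(q^k)q^k.
\]
The prefactor at $t=q^N$ becomes $(a;q)_{N+1}(aq/bc;q)_N/(aq/b,aq/c;q)_N$. So $R(q^N)$ is exactly the right-hand side of \eqref{eq:rogers-finite}, at least for all large $N$ where $|\alpha q^N|<R$ and the denominators are nonzero.

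\emph{Conclusion and main difficulty.} Lemma~\ref{lem:rogers-finite} gives $L(q^N)=R(q^N)$ for all large $N$. Since $q^N\to0$, the identity theorem gives $L=R$ near $0$. I then continue analytically in $t$ over the region $|aqt/bc|<R$, away from poles, and substitute $t=1/d$; this gives \eqref{eq:rogers-main} in the initial domain. Meromorphic continuation in $a,b,c,d$ removes the restrictions on $|b|,|c|$. Lemma~\ref{lem:operators} again gives convergence of the outer series under $|aq/bcd|<R$.

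I expect the main obstacle to be the bookkeeping of the continuation, not the identity itself. First, the $t$-holomorphy of both sides must be established in a neighbourhood of $0$ that is uniform enough to use the identity theorem; this is where the bound on $\Delta_n^{(a)}(f)$ with the extra parameter $aq^n$ enters. Second, the path from the initial domain, where $h$ is holomorphic on $|s|<R$ and $(\alpha;q)_\infty\neq0$, to the full stated region must avoid the poles at $\alpha\in q^{-\NN}$ and at zeros of $(aqt;q)_n$. I would handle this as in Theorem~\ref{thm:qgauss}, using the series \eqref{eq:regularity-series}, which is meromorphic in $\alpha$.
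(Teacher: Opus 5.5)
Your proposal is correct and follows the paper's proof essentially step for step. Both use the same series $L(t)$ and the same integral side, rewritten via $h(s)=(b,c;q)_\infty f(s)/(bs,cs;q)_\infty$ as a regularized integral $\mathcal J_{aq/bc,h}(aqt/bc)$, and both evaluate at $t=q^N$ through \eqref{eq:J-terminating} and Lemma~\ref{lem:rogers-finite}. The argument then concludes by the identity theorem at $t=0$, continuation in $t$ over $|aqt/bc|<R$, and the substitution $t=1/d$. You are only somewhat more explicit than the paper about the initial parameter domain, namely $|b|R<1$ and $|c|R<1$, which makes $h$ holomorphic on $|s|<R$.
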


\begin{proof}
We introduce functions $L_2$ and $R_2$ whose values at $t=q^N$ are the two
sides of Lemma \ref{lem:rogers-finite}:
\begin{align*}
L_2(t)
&\coloneqq \sum_{n=0}^{\infty}
\frac{(1-aq^{2n})(a,b,c,1/t;q)_n}
{(q,aq/b,aq/c,atq;q)_n}
\left(\frac{atq}{bc}\right)^n\Delta_n^{(a)}(f)
\end{align*}
and
\begin{align*}
R_2(t)
&\coloneqq \frac{(a,aq/bc,atq/b,atq/c;q)_\infty}
{(atq,atq/bc,aq/b,aq/c;q)_\infty}
\frac{(b,c,1/t;q)_\infty}{(bc/at,q;q)_\infty}
\int_{atq/bc}^1
\frac{(bcs/at,sq;q)_\infty}{(bs,cs,s/t;q)_\infty}f(s)\,d_qs.
\end{align*}
Indeed, set
\[
h(s)=\frac{(b,c;q)_\infty}{(bs,cs;q)_\infty}f(s).
\]
The product of the second prefactor and the integral defining $R_2(t)$ is
\[
(q;q)_\infty^{-1}\mathcal J_{aq/bc,h}(atq/bc).
\]
Since $h(q^k)=(b,c;q)_k f(q^k)$, \eqref{eq:J-terminating}, with regularizing
parameter $aq/bc$, gives, for all sufficiently large $N$, the following value
at $t=q^N$:
\[
\sum_{k=0}^N
\frac{(b,c,q^{-N};q)_k}{(bcq^{-N}/a,q;q)_k}f(q^k)q^k,
\]
while
\begin{align*}
&\frac{(a,aq/bc,atq/b,atq/c;q)_\infty}
{(atq,atq/bc,aq/b,aq/c;q)_\infty}\bigg|_{t=q^N}=\frac{(a;q)_{N+1}(aq/bc;q)_N}
{(aq/b,aq/c;q)_N}.
\end{align*}
Lemma \ref{lem:operators} and Theorem \ref{thm:regularity} show that $L_2$
and $R_2$ are holomorphic near $t=0$.  Since they agree at $t=q^N$ for all
sufficiently large $N$ and $q^N\to0$, the identity theorem gives
$L_2(t)=R_2(t)$ near zero.  Analytic continuation in $t$ throughout the region
$|atq/bc|<R$, away from the poles of $L_2$ and $R_2$, followed by the substitution $t=1/d$,
proves \eqref{eq:rogers-main} in the initial parameter domain.  Meromorphic
continuation in the remaining parameters then proves the stated result.
Lemma \ref{lem:operators} also shows that the outer series converges
absolutely, with the terminating inner sum evaluated first, under
$|aq/bcd|<R$.
\end{proof}

\begin{Cor}\label{cor:rogers-hypergeometric}
Suppose that
\[
|B_i|\max\left\{1,\left|\frac{aq}{bcd}\right|\right\}<1
\qquad(1\leq i\leq r).
\]
Then
\begin{align*}
&\sum_{n=0}^{\infty}
\frac{(1-aq^{2n})(a,b,c,d;q)_n}
{(q,aq/b,aq/c,aq/d;q)_n}
\left(\frac{aq}{bcd}\right)^n{}_{r+2}\phi_{r+1}\!\left[
\begin{matrix}q^{-n},aq^n,B_1,\ldots,B_r\\ C_1,\ldots,C_{r+1}\end{matrix};q,q
\right]\\
&\quad=\frac{(a,b,c,d,aq/bc,aq/bd,aq/cd,B_1,\ldots,B_r;q)_\infty}
{(aq/b,aq/c,aq/d,aq/bcd,bcd/a,q,C_1,\ldots,C_{r+1};q)_\infty}\\
&\qquad\cdot\int_{aq/bcd}^1
\frac{(bcdt/a,tq,C_1t,\ldots,C_{r+1}t;q)_\infty}
{(bt,ct,dt,B_1t,\ldots,B_rt;q)_\infty}\,d_qt.
\end{align*}
\end{Cor}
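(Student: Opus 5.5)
The plan is to specialize Theorem \ref{thm:rogers} to a quotient of infinite products, exactly as Corollary \ref{cor:qgauss-hypergeometric} was obtained from Theorem \ref{thm:qgauss}. For generic values of the denominator parameters $C_j$, define
\[
f(z)=\frac{(B_1,\ldots,B_r,C_1z,\ldots,C_{r+1}z;q)_\infty}
{(B_1z,\ldots,B_rz,C_1,\ldots,C_{r+1};q)_\infty}.
\]
The hypothesis $|B_i|\max\{1,|aq/bcd|\}<1$ is strict and involves only finitely many parameters. We can therefore choose $R>\max\{1,|aq/bcd|\}$ with $|B_i|R<1$ for every $i$.

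With this $R$, the poles of $f$, located at $z=q^{-j}/B_i$, all lie outside $|z|<R$, so $f$ is holomorphic there. Its values on the lattice are
\[
f(q^k)=\frac{(B_1,\ldots,B_r;q)_k}{(C_1,\ldots,C_{r+1};q)_k}.
\]
Hence the inner finite sum in \eqref{eq:rogers-main} is
\[
\sum_{k=0}^n\frac{(q^{-n},aq^n,B_1,\ldots,B_r;q)_k}{(q,C_1,\ldots,C_{r+1};q)_k}q^k.
\]
This is the terminating ${}_{r+2}\phi_{r+1}$ with argument $q$. The power of $(-1)^kq^{\binom k2}$ in the definition of ${}_r\phi_s$ is $1+s-r=1+(r+1)-(r+2)=0$, so no extra factor appears.

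On the integral side, substituting $f$ into \eqref{eq:rogers-main} gives the displayed right-hand side directly. The constant factor $(B_1,\ldots,B_r;q)_\infty/(C_1,\ldots,C_{r+1};q)_\infty$ comes out in front of the integral. The $t$-dependent part $(C_1t,\ldots,C_{r+1}t;q)_\infty/(B_1t,\ldots,B_rt;q)_\infty$ merges with the existing integrand.

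The assumption $|aq/bcd|<R$ of Theorem \ref{thm:rogers} holds by the choice of $R$, so the identity is valid for generic parameters. The excluded values of the $C_j$, and any other admissible parameter values, are then reached by meromorphic continuation. Both sides are meromorphic in the $C_j$: the left-hand side term by term, using the uniform bounds from Lemma \ref{lem:operators}, and the right-hand side visibly.

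The only point requiring care is the choice of $R$ together with the genericity needed so that no $(C_j;q)_\infty$ vanishes. This is routine, so the argument is essentially a direct substitution.
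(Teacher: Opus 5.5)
Your proposal is correct and matches the paper's proof: define $f$ as the quotient of infinite products, choose $R>\max\{1,|aq/bcd|\}$ with $|B_i|R<1$, observe that $f(q^k)$ turns the inner sum into the terminating ${}_{r+2}\phi_{r+1}$, apply Theorem \ref{thm:rogers}, and reach the remaining parameter values by meromorphic continuation. Your extra checks are details the paper leaves implicit: the vanishing exponent $1+s-r=0$, and the uniformity of the bounds from Lemma \ref{lem:operators} for the $C_j$ in compact sets.
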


\begin{proof}
For generic values of the denominator parameters, define
\[
f(z)=\frac{(B_1,\ldots,B_r,C_1z,\ldots,C_{r+1}z;q)_\infty}
{(B_1z,\ldots,B_rz,C_1,\ldots,C_{r+1};q)_\infty}.
\]
The hypothesis permits us to choose
$R>\max\{1,|aq/bcd|\}$ such that $|B_i|R<1$ for $1\leq i\leq r$.
Hence $f$ is holomorphic in $|z|<R$, and
\[
f(q^k)=\frac{(B_1,\ldots,B_r;q)_k}{(C_1,\ldots,C_{r+1};q)_k}.
\]
The result now follows from Theorem \ref{thm:rogers}.  The remaining
admissible parameter values follow by meromorphic continuation.
\end{proof}

\begin{Rem}
For $r=2$, the inner series in Corollary
\ref{cor:rogers-hypergeometric} is a terminating ${}_4\phi_3$.  Under the
balancing condition
\[
C_1C_2C_3=aB_1B_2q,
\]
the resulting identity recovers the full double-series transformation of
Ismail, Rahman, and Suslov \cite[Theorem~1.1]{irs}, rather than merely a
specialization of it.  To see this, denote the parameters in their theorem by
$a,b_0,c_0,d_0,e_0,f_0,g,h$.  Sears' transformation for a terminating
balanced ${}_4\phi_3$ \cite[Eq.~(III.15)]{gr} gives
\begin{align*}
&{}_4\phi_3\!\left[
\begin{matrix}q^{-n},aq^n,g,h\\ b_0,c_0,aqgh/b_0c_0\end{matrix};q,q
\right]\\
&\quad=\frac{(aq/b_0,aq/c_0;q)_n}{(b_0,c_0;q)_n}
\left(\frac{b_0c_0}{aq}\right)^n
{}_4\phi_3\!\left[
\begin{matrix}
q^{-n},aq^n,aqg/b_0c_0,aqh/b_0c_0\\
aqgh/b_0c_0,aq/b_0,aq/c_0
\end{matrix};q,q
\right].
\end{align*}
Apply this transformation to the inner series on the left-hand side of
\cite[Theorem~1.1]{irs}.  The prefactor in Sears' transformation cancels the factors involving
$b_0,c_0,aq/b_0$, and $aq/c_0$ in the outer summand.  Using
\[
\frac{(q\sqrt a,-q\sqrt a;q)_n}{(\sqrt a,-\sqrt a;q)_n}
=\frac{1-aq^{2n}}{1-a},
\]
we see that the transformed left-hand side, multiplied by $1-a$, is exactly
the left-hand side of Corollary \ref{cor:rogers-hypergeometric} with
\[
(b,c,d)=(d_0,e_0,f_0),
\]
and
\[
(B_1,B_2)=\left(\frac{aqg}{b_0c_0},\frac{aqh}{b_0c_0}\right),\qquad
(C_1,C_2,C_3)=\left(\frac{aqgh}{b_0c_0},
\frac{aq}{b_0},\frac{aq}{c_0}\right).
\]
The balancing condition is automatic.  With the same substitution, expanding
the Jackson integral along its two $q$-lattices gives the two ${}_5\phi_4$
series on the right-hand side of \cite[Theorem~1.1]{irs}; after cancellation
of the product factors, the prefactors are $(1-a)$ times the corresponding
prefactors in that theorem.  Thus Corollary
\ref{cor:rogers-hypergeometric}, followed by the inverse of Sears' transformation,
recovers \cite[Theorem~1.1]{irs} without any additional specialization of its
parameters.  Together
with Remark
\ref{rem:andrews-terminating}, this shows that Theorem \ref{thm:rogers}
should be regarded as a functional and integral extension of known balanced
expansions, rather than as a new terminating ${}_5\phi_4$ transformation.
\end{Rem}

\end{document}